\documentclass[11pt]{amsart} 
\usepackage{graphicx}
\usepackage{amsfonts}
\usepackage{amsmath}
\usepackage{amssymb} 
\usepackage{amsthm}
\usepackage{amscd}
\usepackage[all]{xy}

\textwidth17.5cm
\textheight24.5cm

\addtolength{\evensidemargin}{-2.4cm}
\addtolength{\oddsidemargin}{-2.4cm}
\addtolength{\topmargin}{-.9in}
\addtolength{\itemindent}{-.9in}

\oddsidemargin  0pt \evensidemargin 0pt 
\setlength\parindent{0pt}

\newtheorem{definition}{Definition}[section]
\newtheorem{theorem}[definition]{Theorem}

\newtheorem{corollary}[definition]{Corollary}

\newtheorem{lemma}[definition]{Lemma}

\newtheorem{proposition}[definition]{Proposition}

\newtheorem*{theorem*}{Theorem}
\newtheorem*{corollary*}{Corollary}

\renewcommand{\epsilon}{\varepsilon}
\renewcommand{\deg}{\mbox{deg}}

\newcommand{\larr}{\longrightarrow}
\newcommand{\arr}{\rightarrow}
\newcommand{\corr}{\mbox{Corr}}

\newcommand{\calm}{\mathcal{M}}

\newcommand{\psd}{{\mathbb{P}}^{d-1}}

\newcommand{\rch}{CH_{\mathbb{Q}}}
\newcommand{\kun}{K\"{u}nneth}
\newcommand{\kuns}{K\"{u}nneth }
\newcommand{\Q}{\mathbb{Q}}

\newcommand{\kunn}{K\"{u}nnemann }

\numberwithin{equation}{subsection}


\def\incl{\hookrightarrow}

\def\directsum{\bigoplus}

\def\zah{{\mathbb{Z}}}

\def\ratls{{\mathbb{Q}}}
\def\spec{\mbox{Spec~}}

\def\nin{\not \in}

\def\ker{\mbox{Ker }}
\def\im{\mbox{Im }}

\def\endproof{\medskip}

\def\ds{\displaystyle}

\def\pic0{{\mbox{Pic}}^0}

\def\p1k{{\mathbf{P}}^1_k}

\begin{document}

\baselineskip=16.5pt
\pagenumbering{arabic}
\thispagestyle{empty}

\title{Explicit Chow-Lefschetz decompositions for Kummer manifolds}
\author{Reza Akhtar}
\address{Department of Mathematics, Miami University, Oxford, Ohio, 45056, USA.}
\email{akhtarr@miamioh.edu}
\thanks{  }  
\author{Roy Joshua}
\address{Department of Mathematics, Ohio State University, Columbus, Ohio,
43210, USA.}
\email{joshua@math.ohio-state.edu}
\thanks{ The second author was supported by the NSF}


\begin{abstract}  
Let $X$ be the quotient of a smooth projective variety over a field by a finite group action (in which case we say $X$ is pseudo-smooth), such that the singularities of $X$ are isolated $k$-rational points. Let 
$f: Y \arr X$ be the morphism obtained by blowing up these 
points on $X$.  Assume further that $Y$ is pseudo-smooth, and
that the components of the exceptional divisor are projective spaces.
Then, without invoking the theory of finite-dimensional motives
or assuming any of the standard conjectures, we show  that a Chow-\kuns decomposition on
either $X$ or $Y$ gives rise, by means of an explicit construction, to
a Chow-\kuns decomposition on the other.  We use these constructions to
show that various properties (among them Murre's Conjectures and being of
Lefschetz type) hold for $X$ if and only if they hold for $Y$.  The main examples of interest to us are {\em Kummer manifolds}: these are
obtained by taking the quotient of an abelian variety by the involution $a \mapsto -a$, 
and then blowing up the singular locus.  We give several further
applications of our construction to this particular class of examples.

\end{abstract}
\maketitle
\medskip

\section{\bf Introduction and summary of results}

Let $X \mapsto H^*(X)$ be a Weil cohomology theory on varieties over some algebraically closed field.  According to the standard conjectures of Grothendieck formulated in \cite{gr}, one expects -- among other things -- that if $X$ has dimension $d$, then the \kuns components of the diagonal class $[\Delta_X] \in H^d(X \times X)$ should lie in the subgroup $A^d(X \times X)$ of $H^{\dim X}(X \times X)$ generated by algebraic cycles.  Moreover, for any smooth hyperplane section $W \subseteq X$, the so-called Hard Lefschetz Theorem should hold: specifically, if $L: H^i(X) \arr H^{i+2}(X)$ is the Lefschetz operator, then composing with the iterated operator $L^{d-i}$ should define an isomorphism $A^i(X) \arr A^{d-i}(X)$ for all $i$.  A detailed exposition of the standard conjectures may be found in the comprehensive survey article of Kleiman \cite{kl}.  
  
\medskip

A related but stronger set of conjectures was formulated by Jacob Murre in \cite{mur}.  Murre conjectured that the \kuns components of the 
diagonal class of each such $X$ should actually be defined in the category of (rational) Chow motives, and that these projectors should act on the 
(rational) Chow groups of $X$ in a prescribed manner.  The first of Murre's conjectures -- the existence of a Chow-\kuns decomposition -- 
has been verified for certain classes of varieties (curves, surfaces, abelian varieties, and various other special cases); however, it remains 
wide open in the general case.  The existence of a Chow-\kuns decomposition for abelian varieties was first demonstrated by Shermenev \cite{sh}, 
although it is a later construction of the same by Deninger and Murre \cite{dm} that lends itself most readily to applications.  \kunn \cite{ku} used 
the Deninger-Murre construction to prove that the Hard Lefschetz Theorem holds for abelian varieties at the level of Chow motives.  In fact, \kunn 
proved much more, constructing Lefschetz, Lambda, and *-operators for abelian varieties, and showing that various identities among these, which hold in the setting of 
K\"{a}hler geometry, actually hold at the level of Chow groups.  More significantly, he showed that if a variety is of {\em Lefschetz type} (see Section \ref{lefalgebra}), then many expected properties -- including the Hard Lefschetz Theorem and the existence of projectors appropriately refining the Chow-\kuns decomposition -- follow immediately.  

\medskip

Let $A$ denote an abelian variety over an algebraically closed field of characteristic different from $2$.  Its associated {\em Kummer variety} $K_A$ is the quotient of $A$ by the involution $a \mapsto -a$.  If $A$ has dimension $d>0$, then $K_A$ has $2^{2d}$ singular points, which are precisely the images of the $2$-torsion points of $A$ under the quotient map $A \arr K_A$.  Blowing up these points yields a smooth variety $K_A'$ which we call the {\em Kummer manifold}.  Even though $K_A$ is a singular variety, it is pseudo-smooth (i.e. the quotient of a smooth variety by the action of a finite group scheme), so basic methods of intersection theory may be used to study its Chow groups with $\ratls$-coefficients (see \cite[Example 1.7.6]{fu}).  In earlier work \cite{aj1}, the authors of the present article used the  Chow-\kuns decomposition for $A$ constructed by Deninger and Murre to construct an explicit  Chow-\kuns decomposition for $K_A$.  Although the {\em existence} of such a decomposition follows from the theory of finite-dimensional motives (see \cite{gp}), we gave several applications for our construction which would not have been possible from the abstract theory.  This work was continued in \cite{aj2}, in which we used K\"{u}nnemann's Lefschetz algebra structure on the Chow groups of an abelian variety to establish one for Kummer varieties.  Once again, the {\em existence} of such a decomposition was established in \cite{kmp} under the assumption of parts of the standard conjectures (which are known to hold for Kummer manifolds in characteristic $0$ by work of Arapura \cite{ar}); however, our construction has no dependence on characteristic and furthermore lends itself to several applications.

\medskip

In the present article, we use our constructions for $K_A$ to exhibit an explicit Chow-\kuns decomposition for the Kummer manifold $K_A'$ and also an 
explicit Lefschetz algebra structure on its Chow groups.  The following is a technical result, which combined with our earlier
results (see \cite{aj1} and \cite{aj2}), provides the Chow-\kuns decomposition for the Kummer manifolds.
  
 \begin{theorem*}\label{summaryth}(See Theorems \ref{blowup} and \ref{leftheorem})
 Let $X$ denote a pseudo-smooth variety of dimension $d$ over a field $k$ and $Y$ the variety obtained by blowing up a finite number of $k$-rational points on $X$.  
Suppose further that $Y$ is pseudo-smooth, and that the (respective) exceptional divisors of the blow-up at each point are isomorphic to ${\mathbb{P}}^{d-1}$.  
\vskip .2cm 

Then:

 \begin{itemize}
\item 
If either $X$ or $Y$ has a Chow-\kuns decomposition, then this can be used to construct (explicitly) a Chow-\kuns decomposition on the other (cf. \eqref{CK.for.Y} and Corollary \ref{blowdown}.)

\item
If the Chow-\kuns decomposition (so constructed) on either $X$ or $Y$ satisfies Poincar\'e duality (respectively, Murre's Conjecture {\bf B}, {\bf B'},  {\bf C}, {\bf D}) then the same is true for the other variety.  
 
\item
$Y$ is of Lefschetz type if and only if $X$ is of Lefschetz type.
 
\end{itemize}  
 
\end{theorem*}
 
When combined with the results of \cite{aj1} and \cite{aj2}, we may then conclude:

\begin{corollary*}
Let $A$ denote an abelian variety of dimension $d>0$ over an algebraically closed field of characteristic 
different from $2$ and $K_A'$ its Kummer manifold.  Then $K_A'$ has a Chow-\kuns decomposition satisfying Poincar\'{e} duality and Murre's conjecture {\bf B'}; moreover, $K_A'$ has Lefschetz type in the sense of Definition ~\ref{lefalg}.  If $d \leq 4$, then $K_A'$ also satisfies Murre's conjecture {\bf B}.  
\end{corollary*}
    
We also give several other applications of our construction.  The first  concerns powers of the relation of algebraic equivalence (on algebraic cycles).  For a pseudo-smooth variety $V$, let $L\rch^p(V)$ denote the subgroup of $\rch^p(V)$ consisting of cycles algebraically equivalent to zero.  For $r \geq 1$, we denote by $L^{*r}$ the $r$th power of (the equivalence relation) $L$, as defined by Hiroshi Saito \cite{sai}.  

\begin{theorem*}(See Proposition \ref{adeqkummer})
Let $A$ denote an abelian variety of dimension $d>0$ over an algebraically closed field of characteristic different from $2$. Let $X=K_A$ and $Y=K_A'$.  
Let $[\Delta_X] = \sum_{i=0}^{2d} \pi_i^X $ denote the Chow-\kuns decomposition for $X$ constructed in \cite{aj1} and $[\Delta_Y]=\sum_{i=0}^{2d} \pi_i^Y$ the 
Chow-\kuns decomposition for $Y$ constructed in  ~\eqref{CK.for.Y}.  Define filtrations on $\rch^*(X)$ and $\rch^*(Y)$ by $\ds F^r \rch^*(X) = \sum_{i=0}^{2d-r}  \pi_i^X \bullet \rch^p(X) $ and $\ds F^r \rch^*(Y) = \sum_{i=0}^{2d-r}  \pi_i^Y \bullet \rch^p(Y)$.  Then: 

\begin{enumerate}

\item For  $r \geq 1, ~ ~ F^r \rch^d(X) = L^{*r} \rch^d(X) \mbox{ and } F^r \rch^d(Y) = L^{*r} \rch^d(Y).$

\item For $r > d$, $L^{*r} \rch^*(X)=0$ and $L^{*r} \rch^*(Y)=0$.  

\end{enumerate} 

\end{theorem*}
 
As another application, we prove a Hard Lefschetz Theorem for Chow groups of Kummer manifolds in the case that the base field is the algebraic closure of a finite field of characteristic different from $2$.  

\begin{theorem*} (See Corollary \ref{hardlefth})
Let $Y$ denote the Kummer manifold associated to an abelian variety of dimension $d>0$ over an algebraic closure of a finite field of characteristic different from $2$, and let $L_Y$ denote the Lefschetz operator as constructed in the present paper.  Then for $2p \leq d$, the map $\rch^p(Y) \arr \rch^{d-p}(Y)$ defined by $c \mapsto L_Y^{d-2p} \bullet c$ is an isomorphism.
\end{theorem*}

\medskip

Most of our arguments rely on the following fundamental fact about the structure of the Chow groups of blow-ups of the sort we are considering.  Suppose $f: Y \arr X$ is the morphism describing the blow-up of a pseudo-smooth variety $X$ at a point, such that $Y$ is pseudo-smooth
and the exceptional divisor $Z$ is isomorphic to the projective space
over the ground field.  This is a strong assumption, but it guarantees that the cohomology of the exceptional divisors are generated by algebraic cycles,
 which may be viewed as the underlying reason why our strategy works.  In this case, $\rch^*(Y \times Y)$ is the internal
direct sum of two subgroups, which we call $A$ and $B$: $A$ consists
of cycles pulled back from $X \times X$ via $f \times f$, while $B$
consists of cycles supported on $Z \times Y \cup Y \times Z$.  With respect to the non-commutative ring structure given by the
composition of correspondences on $\rch^*(Y \times Y)=CH^*(Y \times Y) \otimes \ratls$, $A$
is a ring and $B$ is a two-sided ideal of $\rch^*(Y \times Y)$;
furthermore, $A$ and $B$ are nearly orthogonal to each other.  Using
these properties --and the crucial fact that every cycle in $B$ can be
written as a sum of external products of cycles on $Y$ -- we can,
starting with a Chow-\kuns decomposition on $X$, construct
Chow-\kuns projectors on $Y$, and then use these to construct the
appropriate operators necessary for the exhibition of a Lefschetz
algebra structure on $\rch^*(Y \times Y)$.  We also show that if a Chow-\kuns
decomposition or Lefschetz algebra structure is known for $Y$, then 
pushing forward all the relevant cycles to $X$ will establish the analogous results there.

\medskip

The paper is organized as follows.  We begin in Section \ref{preliminaries} by providing definitions and results from intersection theory.  
In Section \ref{ck}, we establish some important structural results concerning the Chow groups of a blow-up, and then give our main construction  
involving Chow-\kuns decompositions.  The remainder of the paper is devoted to various applications of our explicit constructions.
The first application, to Murre's Conjectures, appears in Section \ref{app1}.  In Section \ref{lefschetz}, we study Lefschetz decompositions in 
the context of blow-ups. Both of these are discussed in a fairly general setting. We conclude in section \ref{app2} with various specialized applications to
Kummer varieties and manifolds.
\medskip 

{\bf Acknowledgments.} 
The present work was prompted by helpful exchanges with several colleagues; in particular, we thank  Donu Arapura, Michel Brion, Igor Dolgachev, and Charles Vial for helpful discussions.  We thank the referee for a careful reading, and for several suggestions which helped improve this article. 

\medskip

\section{\bf Preliminaries}\label{preliminaries}
\subsection{Correspondences and Murre's Conjectures}\label{prelim1}
\label{basic.terminology}
\vskip .3cm

Let $k$ denote a field.  For convenience, we refer to the quotient of a smooth 
variety by the action of a finite group (scheme) as a {\em pseudo-smooth 
variety}.  It is well-known  that 
the basic machinery of intersection theory and the usual formalism for 
correspondences extends naturally from smooth varieties to pseudo-smooth 
varieties, provided one uses rational coefficients.  We may thus define the 
category $\calm_k(\Q)$ of (rational) Chow motives of 
pseudo-smooth projective varieties in the same way as for smooth projective 
varieties (see for example,  \cite{sch}).
Throughout this article, we use the notation $CH^i(X)$ for the Chow
groups of (an algebraic scheme) $X$ and write $\rch^i(X)=CH^i(X)
\otimes \Q$.  It is worth noting that if a finite group $G$ acts on a
smooth variety $X$,  then the machinery of equivariant intersection theory
allows us to identify the equivariant Chow groups
$CH^*_G(X)_{\ratls}$ with $\rch^*(X/G)$. Thus, the extension of the
usual formalism of correspondences to pseudo-smooth varieties (see \cite[Example 1.7.6]{fu}) can also
be derived from the analogous theory in the equivariant context.

\medskip
 
Since we will make use of many projection maps in the sequel, we reserve the symbol $p$ for these, with the superscript indicating the domain
 and the subscript the range.  For example, if $k$ is a field and $X,Y,Z$ are pseudo-smooth varieties over $k$, $p_{13}^{XYZ}: X \times Y \times Z \arr X \times Z$ is the map $(x,y,z) \mapsto (x,z)$. 
A subscript of $\emptyset$ 
indicates the structure morphism; for example, $p^{XY}_{\emptyset}$ is the 
structure morphism $X \times Y \arr \spec k$.  Given cycles $\alpha \in CH^i(X)$ 
and $\beta \in CH^j(Y)$, we refer to their exterior product $\alpha \times \beta = {p^{XY}_1}^* \alpha \cdot {p^{XY}_2}^* \beta$ as a {\em 
product cycle} on $X \times Y$ of {\em type} $(i,j)$; by abuse of terminology, we sometimes also refer to linear combinations of such elements as product 
cycles.  

\medskip

\medskip

Now suppose $X$, $Y$, and $Z$ are pseudo-smooth varieties over $k$,
with $\gamma \in CH^*(X \times Y)$ and $\delta \in CH^*(Y \times Z)$.
The composition $\delta \bullet \gamma \in CH^*(X \times
Z)$ is defined by $$\delta \bullet \gamma =
{p^{XYZ}_{13}}_*({p^{XYZ}_{12}}^* \gamma \cdot {p^{XYZ}_{23}}^* \delta).$$    

Composition of correspondences is associative; we will use this fact freely without explicit mention in the sequel.  
If $s: X \times Y \arr Y \times X$ is the 
exchange of factors, we define the {\em transpose} of $\alpha \in
CH^*(X \times Y)$ by $\alpha^t: = s^*(\alpha)$.  
We write $\Delta_X$ for the diagonal in $X \times X$ and $\Gamma_f$
for the graph of a morphism $f$ between (pseudo-smooth) varieties.  Since $[\Delta_X] \bullet \gamma  = \gamma= \gamma\bullet [\Delta_X]$ for $\gamma \in CH^*(X \times X)$, the operation $\bullet $ makes $CH^*(X \times X)$ into a (noncommutative) ring with unit element $[\Delta_X]$; furthermore, $CH^{\dim X}(X \times X)$ is a subring of $CH^*(X \times X)$.

\medskip

\vskip .3cm


We say that a variety $X$ of dimension $d$ has a {\em Chow-\kuns decomposition} (or CK-decomposition for short) 
if the diagonal class $[\Delta_X] \in \rch^d(X \times X)$ has a decomposition into mutually orthogonal idempotents, each of which maps onto the appropriate \kuns component under the cycle map.  More precisely, there exist $\pi_i \in 
\rch^d(X \times X)$, $0 \leq i \leq 2d$, such that:

(i) $\ds [\Delta_X]= \sum_{i=0}^{2d} \pi_i$;

(ii) $\pi_i \bullet \pi_i=\pi_i$ for all $i$, and $\pi_i \bullet \pi_j = 0$ for $i \neq j$;

(iii) If $H^*$ is a Weil cohomology theory, then for each $i$, the image of $\pi_i$ under the cycle map $cl_X: \rch^d(X \times X) 
\arr H^{2d}(X \times X; \ratls)$ is the $(2d-i,i)$ K\"{u}nneth component of the 
diagonal class. 
\medskip

We say that a CK-decomposition as above satisfies {\em Poincar\'{e} duality} if $\pi_{2d-i}={\pi_i}^t$ for $0 \leq i \leq 2d$.

Finally, we recall the conjectures of Murre, formulated in \cite{mur} for smooth varieties:

\medskip

{\bf Murre's Conjectures}

\smallskip

Let $X$ denote a pseudo-smooth projective variety.  Then 

\textbf{A}. $X$ has a CK-decomposition $\ds [\Delta_X]= \sum_{i=0}^{2d} \pi_i$.  

\textbf{B}. If $i<j$ or $i>2j$, then $\pi_i$ acts as $0$ on $\rch^j(X)$.

\textbf{B'}. If $i<j$ or $i>j+ \dim X$, then $\pi_i$ acts as $0$ on $\rch^j(X)$.  

\textbf{C}.  If we define $F^0 \rch^j(X) = \rch^j(X)$ and $\ds F^k 
\rch^j(X) = \ker {\pi_{2j+1-k}}_*|_{F^{k-1} \rch^j(X)}$ for $k>0$, then the resulting 
filtration is independent of the particular choice of projectors $\pi_i$.  

\smallskip

\textbf{D}.  For any filtration as defined in \textbf{C}, $F^1 \rch^j(X)$ is the 
subgroup of cycles in $\rch^j(X)$ homologically equivalent to zero.

\medskip

\subsection{Intersection theory on pseudo-smooth varieties }

One can define pullback maps,  
pushforward maps, and intersection products in the context of pseudo-smooth
varieties, and many basic results (including, in particular, the
projection formula) carry over from the smooth case into this setting, provided one uses rational coefficients; see  \cite{dbn} for details.  
For this reason, we use rational coefficients throughout this section, even though many of the results (appropriately rephrased) hold with integral coefficients in the smooth case.
In the interest of making our proofs more concise, we will work with correspondences as much as possible; however, it will occasionally serve intuition better to argue directly using pullback and pushforward maps.  To this end, we record the following ``dictionary" (cf. \cite[Proposition 16.1.1]{fu} and \cite[Example 1.7.6]{fu}) which allows us to go back and forth between these two interpretations.  

\begin{lemma}\label{dict}
Let $X,Y, Z$ be pseudo-smooth projective varieties over a field.
Suppose $f: X \arr Y$ and $g: Y \arr Z$ are morphisms, and $\alpha \in
\rch^*(X \times Y)$,  $\beta \in \rch^*(Y \times Z)$, $\gamma \in \rch^*(X \times Z)$.  Then the following formulas hold:

$$(1 \times g)_*(\alpha)= [\Gamma_g] \bullet \alpha, ~~  (f \times 1)^*(\beta)= \beta \bullet [\Gamma_f], ~~ (f \times 1)_*(\gamma) = \gamma \bullet [\Gamma_f^t], ~~  (1 \times g)^*(\gamma)= [\Gamma_g^t] \bullet \gamma.$$

\end{lemma}

An important observation is that composition of correspondences is well-behaved with respect to pullback of cycles. 

\medskip

\begin{lemma}\label{compat}

With notation as in Lemma \ref{dict}, suppose further that $g$ is a morphism of degree $d$, and $\alpha, \beta \in \rch^*(Z \times Z)$.  Then $\ds (g \times g)^* \alpha \bullet (g \times g)^* \beta = d (g \times g)^*(\alpha \bullet \beta)$.

\end{lemma}

\proof

Observe first that by the projection formula, we have: 
$$[\Gamma_g] \bullet [\Gamma_g^t] = [\Delta_Z] \bullet [\Gamma_g]
\bullet [\Gamma_g^t] = (g \times 1)_* (g \times 1)^* [\Delta_Z] = d [\Delta_Z].$$
Then 

\begin{align} 
\begin{split}
(g \times g)^* \alpha \bullet (g \times g)^* \beta  &= (1 \times g)^* (g \times 1)^* \alpha \bullet (1 \times g)^* (g \times 1)^* \beta  \\ &=[\Gamma_g^t] \bullet \alpha \bullet [\Gamma_g] \bullet [\Gamma_g^t] \bullet \beta \bullet [\Gamma_g] \\ 
&=d \left( [\Gamma_g^t] \bullet \alpha \bullet \beta \bullet [\Gamma_g] \right)=d (g \times g)^* (\alpha \bullet \beta). \end{split}
\end{align}

\qed

\medskip

The following fact about compositions of product cycles is surely well known; however, since we will be using it so frequently, we include a proof in the interest of completeness of exposition.  

\begin{lemma}\label{orthoprod}

Let $X$ be a pseudo-smooth irreducible projective variety of dimension $d$ over some field $k$.  Suppose 
$\alpha \in \rch^i(X)$, $\beta \in \rch^j(X)$, $\gamma \in \rch^k(X)$, and $\delta 
\in \rch^{\ell}(X)$.  Then 

$$(\alpha \times \beta) \bullet (\gamma \times \delta)= (\gamma \times \beta) 
\cdot {p_{\emptyset}^{XX}}^*  {p_{\emptyset}^X}_* ( \delta \cdot \alpha).$$
 
In particular, $$(\alpha \times \beta) \bullet (\gamma \times \delta)=  m(\alpha, \delta) (\gamma \times \beta)$$ for some $m(\alpha, \delta) \in \ratls$, which equals zero if $i+\ell \neq d$.

\end{lemma}

\begin{proof}

\begin{align}
  \begin{split}
(\alpha \times \beta) \bullet (\gamma \times \delta) &={p_{13}^{XXX}}_*({p_{12}^{XXX}}^* ( {p_1^{XX}}^* \gamma \cdot {p_2^{XX}}^* \delta) \cdot {p_{23}^{XXX}}^* ( {p_1^{XX}}^* \alpha \cdot {p_2^{XX}}^*
\beta))\\
&={p_{13}^{XXX}}_*({p_{13}^{XXX}}^* {p_1^{XX}}^* \gamma \cdot {p_{2}^{XXX}}^* 
\delta \cdot {p_2^{XXX}}^* \alpha  \cdot {p_{13}^{XXX}}^* {p_2^{XX}}^* \beta) \\
&= {p_{13}^{XXX}}_*({p_{13}^{XXX}}^* ({p_1^{XX}}^* \gamma  \cdot 
{p_2^{XX}}^* \beta) \cdot {p_{2}^{XXX}}^* 
\delta \cdot {p_2^{XXX}}^* \alpha)\\
&={p_1^{XX}}^* \gamma  \cdot {p_2^{XX}}^* \beta \cdot 
{p_{13}^{XXX}}_* {p_{2}^{XXX}}^*(\delta  \cdot \alpha)\\
&={p_1^{XX}}^* \gamma  \cdot  {p_2^{XX}}^* \beta \cdot 
{p_{\emptyset}^{XX}}^*  {p_{\emptyset}^X}_* ( \delta \cdot 
\alpha)\\
&=(\gamma \times \beta) \cdot {p_{\emptyset}^{XX}}^*  {p_{\emptyset}^X}_* 
(\delta \cdot \alpha). \notag \end{split} \end{align}

Now let $m(\alpha, \delta)= {p_{\emptyset}^{XX}}^*  
{p_{\emptyset}^X}_* (\delta \cdot \alpha)$.  
If $i +\ell \neq d$, then ${p_{\emptyset}^X}_* ( \delta \cdot \alpha) \in 
\rch^{i+\ell-d}(\spec k)=0$.  If $i+\ell=d$, then ${p_{\emptyset}^{XX}}^*  
{p_{\emptyset}^X}_* (\delta \cdot \alpha) \in \rch^0(X \times X) \cong \ratls$.

\qed \end{proof}

\subsection{Lefschetz algebra structure}\label{lefalgebra}

We recall, with slight revisions, the definition of Lefschetz algebra from \cite{ku}.

\begin{definition}\label{lefalg}
A {\em Lefschetz algebra} of dimension $d$ is a triple $(R, \{\eta_i\}_{i=0}^{2d},  L, \Lambda)$ where $R=\directsum_{p \in \zah} R^p$ is a graded $\ratls$-algebra, $L \in R^1$, $\Lambda \in R^{-1}$, and

\begin{enumerate}

\item[(i)]

$\eta_0, \ldots, \eta_{2d}$ are elements of $R^0$ satisfying  $$\ds \sum_{i=0}^{2d} \eta_i = 1 \mbox{ and } \eta_i \circ \eta_j = \left\{ \begin{array}{cc}    \eta_i & \mbox{
      if } i=j \\ 0 & \mbox{ if } i \neq j. \\ \end{array} \right.$$

\item[(ii)]

For all $i$, $L \circ \eta_i = \eta_{i+2} \circ L$.

\item[(iii)]

For all $i$, $\Lambda \circ \eta_i = \eta_{i-2} \circ \Lambda$.

\item[(iv)]

$[\Lambda, L]: = \Lambda \circ L - L \circ \Lambda = \sum_{i=0}^{2d} (d-i) \eta_i$.

\end{enumerate} 

\end{definition}

The examples of primary concern to us arise when $X$ is a pseudo-smooth projective variety of dimension $d$ over some field, $R$ is the ring $\rch^{*+d}(X \times X)$, the $\eta_i$ are Chow-\kuns components of $[\Delta_X]$, and $L \in \rch^{d+1}(X \times X)$, $\Lambda \in \rch^{d-1}(X \times X)$ are elements satisfying the identities (ii)-(iv).  If $\rch^{*+d}(X \times X)$ can be endowed with the structure of a Lefschetz algebra in this manner, we say that $X$ is of {\em Lefschetz type}.  Varieties of Lefschetz type are of interest largely due to the following result, which may be deduced formally from the definition.

\medskip

\begin{corollary}\cite[Theorem 4.1]{ku}
Let $R$ be a Lefschetz algebra as above, and define \\ $I=\{(i,k) \in \zah \times \zah ~ | ~ \max \{0, i-d\} \leq k \leq \lfloor i/2 \rfloor \}$.
Then $R$ has a Lefschetz decomposition, i.e. there exist elements $p_{i,k} \in R^0$ satisfiying:

\begin{enumerate}

\item[(i)]$ \sum _{k} q_{i, k} = \eta_i$ for each $i$.
\item[(ii)] $q_{i, k} \circ \eta_j = \eta_j \circ q_{i, k} = q_{i, k}$ if $i=j$ 
and $0$
otherwise.
\item[(iii)] $q_{i, k} = 0$ for $(i, k) \nin  I$.
\item[(iv)] $q_{i, k} \circ q_{j, l} = q_{i, k}$ if $i=j$ and $k =l$ and $0$
otherwise.
\item[(v)] $q_{i, k} \circ L = L \circ q_{i-2,
k-1}$.
\item[(vi)] $\Lambda \circ q_{i, k} = q_{i-2, k-1} \circ
\Lambda$.
\item[(vii)] $L \circ \Lambda \circ q_{i, k} = k(g-i +k+1) q_{i,
k}$.
\item[(viii)] $\Lambda \circ L \circ q_{i, k} = (k+1) (g-i+k) q_
{i, k}$.
\end{enumerate}

\end{corollary}

\begin{corollary}\cite[Theorem 5.2]{ku}(Hard Lefschetz Theorem)
If $X$ is a pseudo-smooth projective variety of dimension $d$ over a field $k$, and  $R=((\rch^*(X \times X), \{\eta_i\}_{i=0}^{2d}, L, \Lambda)$ is a Lefschetz algebra, then for $i$, $0 \leq i \leq d$, the correspondence $L^{d-i}$ defines an isomorphism of motives $h^i(X) \stackrel{\cong}{\larr} h^{2d-i}(X)(d-i)$ with inverse $\Lambda^{d-i}$, where $h^j(X)$ is the rational Chow motive $(X, \eta_j,0)$.  

\end{corollary}

\section{\bf An explicit Chow-\kuns decompositions for blow-ups}\label{ck}

\subsection{Main construction and technical details}\label{maincon}

For the balance of the paper, we fix the following notation and hypotheses.

\medskip

\textbf{Assumptions.}

\begin{itemize}
\label{key.assumpns}
\item

$X$ is a pseudo-smooth projective variety of dimension $d$ over some field $k$.

\item

$Y$ is the blow-up of $X$ along $T=\{a\}$, where $a$ is a $k$-rational point of $X$.

\item

$Y$ is pseudo-smooth and the exceptional divisor $Z$ of the blow-up is isomorphic to $\psd$.

\end{itemize}

Let  

\xymatrix{ & E \ar[r]^j \ar[d]^g & Y \ar[d]^f  \\ & T \ar[r]^i & X } be the commutative square describing this blow-up.  

\medskip

The objective of this section is to describe explicitly how a CK-decomposition for $X$ can be used to construct one on $Y$, and conversely.  We begin by setting up the framework for our construction and proving some auxiliary results.  

\vskip .3cm

\medskip

First, observe that $Y \times Y$ is the blow-up of $X \times X$ along the closed subscheme $S=S_1 \cup S_2$, where $S_1=T \times X$ and $S_2 = X 
\times T$.  The exceptional divisor of this blow-up is  $E=E_1 \cup E_2$, where 
$E_1=Z \times Y$ and $E_2=Y \times Z$. Thus we have  commutative diagrams:

\xymatrix{ E \ar[rr]^{\tilde{j}} \ar[dd]^{\tilde{g}} & & Y \times Y
\ar[dd]^{f \times f} & & & E_1 \ar@{^(->}[rr]^{j \times 1}  \ar[dd]^{g \times 1} & &  Y \times 
Y \ar[dd]^{f \times 1} & & 
 E_2 \ar@{^(->}[rr]^{1 \times j}  \ar[dd]^{1 \times g} & &  Y \times 
Y \ar[dd]^{1 \times f} 
\\ 	
\\ S \ar[rr]^{\tilde{i}} & & X \times X  & & & T \times Y \ar@{^(->}[rr]^{i \times 1} & & X \times Y & &  Y \times T \ar@{^(->}[rr]^{1 
\times i} & & Y \times X. }

\bigskip

Note that even when $X$ is smooth, $S$ is not regularly imbedded in $X \times X$, so we cannot use the blow-up exact sequence 
to 
relate the Chow groups of $Y \times Y$ to those of $X \times X$.  Instead, we
use the localization sequence.  Let $m$ be an integer, $0 \leq m \leq d$, and define $U_X=X \times X - S$ and $U_Y= Y \times Y -E$.  Then by 
there is a commutative diagram with exact rows:
\begin{equation}
 \label{local.diag}
\xymatrix{ 
& \rch^{m-1}(E) \ar[rr]^{{\tilde{j}}_*} \ar[dd]^{{\tilde{g}}_*} & & \rch^m(Y \times 
Y) \ar[rr] \ar[dd]^{(f \times f)_*} & & \rch^m(U_Y) \ar[dd]^{\cong} \ar[r] & 0  \\ \\
& \rch^{m-d}(S) \ar[rr]^{{\tilde{i}}_*} & & \rch^m(X \times X) \ar[rr] & & \rch^m(U_X) 
\ar[r] & 0  
} 
\end{equation}

\medskip

The aim of the rest of this section is to describe a decomposition of $\rch^*(Y \times Y)$ as the internal direct sum of two subgroups, $A$ and $B$, and to study the multiplicative structure of $\rch^*(Y \times Y)$ as a ring (under composition of correspondences) with respect to these subgroups.

\medskip

To this end, define 
\begin{equation}
 \label{xi.def}
\zeta=(f \times f)^*[\Delta_X] \in \rch^d(Y \times Y), ~ ~   A= \zeta \bullet \rch^*(Y \times Y) \bullet \zeta, \mbox{ and }  A_m = A \cap \rch^m(Y \times Y), ~ 0 \leq m \leq 2d.
\end{equation}
Clearly, $A=\directsum_{m=0}^{2d} A_m$.

\begin{lemma}\label{abouta}
For $\gamma \in \rch^*(Y \times Y)$, $(f \times f)^* (f \times f)_* \gamma = \zeta \bullet \gamma \bullet \zeta$.  In particular, if $\gamma \in (f \times f)^* \rch^*(X \times X)$, then $\zeta \bullet \gamma \bullet \zeta = \gamma$.  Furthermore, $A=(f \times f)^* \rch^*(X \times X)$ is a ring with unit element $\zeta$, and $A_d$ is a subring of $A$.
\end{lemma}

\proof

From the projection formula and the fact that $f$ has degree $1$, it follows that $[\Gamma_f] \bullet [\Gamma_f^t] = [\Delta_X]$.  Then 
$$\zeta \bullet \gamma \bullet \zeta = (f \times f)^* [\Delta_X] \bullet \gamma \bullet (f \times f)^* [\Delta_X]
= [\Gamma_f^t] \bullet [\Delta_X] \bullet [\Gamma_f] \bullet \gamma \bullet [\Gamma_f^t] \bullet [\Delta_X] \bullet [\Gamma_f] 
= [\Gamma_f^t] \bullet [\Gamma_f] \bullet \gamma \bullet [\Gamma_f^t]  \bullet [\Gamma_f] .$$
$$=(f \times f)^*(f \times f)_* \gamma.$$

The remaining assertions are clear from Lemma \ref{compat}.

\qed

\endproof

Next, define $$B'=(j \times 1)_* (\ker ((f \circ j) \times 1)_*) $$ $$B''=(1 \times j)_*  (\ker (1 \times (f \circ j))_*)$$

Let $B=B'+B''$; for $0 \leq m \leq 2d$, set $B'_m =B' \cap \rch^m(Y \times Y)$,  $B''_m=B''  \cap \rch^m(Y \times Y)$, and $B_m = B'_m  + B''_m $.  Then there are direct sum decompositions $$B=\directsum_{m=0}^{2d} B_m , ~ B'=\directsum_{m=0}^{2d} B'_m, ~  B''=\directsum_{m=0}^{2d} B''_m.$$

\medskip

There is a rather important orthogonality relationship between $A$ and $B$.

\begin{proposition}\label{weakor}(Orthogonality principle)
Suppose $\alpha \in A$, $\beta' \in B'$ and $\beta'' \in B''$.  Then $\beta' \bullet \alpha=0$ and $\alpha \bullet \beta''=0$.  
\end{proposition}

\proof

Write $\alpha=(f \times f)^* \delta$, $\beta'=(j \times 1)_*
\epsilon_1$ and $\beta''=(1 \times j)_* \epsilon_2$, where $\delta \in
\rch^d(X \times X)$, $\epsilon_1 \in \ker ((f \circ j) \times 1)_* $, and
$\epsilon_2 \in \ker  (1 \times (f \circ j))_*  $.  Then 

$$\beta' \bullet \alpha =  \epsilon_1 \bullet [\Gamma_j^t] \bullet   [\Gamma_f^t] \bullet \delta \bullet [\Gamma_f] = ((f \circ j) \times 1)_* \epsilon_1 \bullet \delta \bullet [\Gamma_f] = 0.$$

$$\alpha \bullet \beta'' =[\Gamma_f^t] \bullet \delta \bullet [\Gamma_f] \bullet [\Gamma_j] \bullet \epsilon_2 =[\Gamma_f^t] \bullet \delta \bullet (1 \times (f \circ j))_* \epsilon_2 = 0.$$ \qed

A simple chase on diagram \ref{local.diag} shows that $\sigma=[\Delta_Y] - \zeta \in B_d$.  Direct calculation then shows that the formulas $$\sigma \bullet \sigma=\sigma, ~ \sigma^t=\sigma, ~ \mbox{ and } \sigma \bullet \zeta = \zeta \bullet \sigma = 0$$

hold.  Moreover, Proposition \ref{weakor} shows that for $\beta' \in B'$, $\beta'' \in B''$, we have $\beta' \bullet \sigma=\beta'  \mbox{ and } \sigma \bullet \beta''=\beta''.$

\medskip

Given $\gamma \in  \rch^*(Y \times Y)$, let $\gamma' = \gamma - \zeta \bullet \gamma \bullet \zeta$.  Then using Lemma \ref{abouta} we calculate: $$(f \times f)_*(\gamma') = (f \times f)_* (\gamma - \zeta \bullet \gamma \bullet \zeta ) = (f \times f)_*  \gamma - (f \times f)_* (f \times f)^* (f \times f)_* \gamma = (f \times f)_* \gamma - (f \times f)_* \gamma = 0.$$
Another chase on diagram \ref{local.diag} shows that $\gamma' \in \im {\tilde{j}}_* \cap \ker (f \times f)_*$.  This shows that there are well-defined maps $$s: A\oplus B \arr \rch^*(Y \times Y) \mbox{ and } t: \rch^*(Y \times Y) \arr A \oplus B$$ given by  $s(a,b)=a+b$ and $t(\gamma)=(\zeta \bullet \gamma \bullet \zeta, \gamma - \zeta \bullet \gamma \bullet \zeta)$.

\begin{proposition}\label{dstrans}
~
\begin{enumerate}

\item

$s$ and $t$ are inverse isomorphisms; thus, $\rch^*(Y \times Y)$ is the internal direct sum of $A$ and $B$.  

\item
$A$ and $B$ are each closed under transposition of cycles.

\end{enumerate}

\end{proposition}

\proof

It is clear from the definitions that $s \circ t = 1$, so it suffices to show that $s$ is injective and that $t$ is surjective.  To show the former, we prove $A \cap B=\{0\}$.  If $\gamma \in A \cap B$, then in particular, $\gamma \bullet \zeta = \gamma = \zeta \bullet \gamma$, and also $\gamma=b' + b''$ for some $b' \in B'$ and $b'' \in B''$.  Using Proposition \ref{weakor}, we calculate:

$$\gamma=\zeta \bullet \gamma \bullet \zeta = \zeta \bullet (b' \bullet \zeta) + (\zeta \bullet b'') \bullet \zeta = 0.$$  

Now suppose $(\alpha,\beta) \in A \oplus B$.  Setting $\gamma=\alpha+\beta$ and writing $\beta = \beta' + \beta''$ with $\beta' \in B'$ and $\beta'' \in B''$, we have $$\zeta \bullet \gamma \bullet \zeta = \zeta \bullet \alpha \bullet \zeta + \zeta \bullet \beta \bullet \zeta = \alpha + \zeta \bullet (\beta' \bullet \zeta) + (\zeta \bullet \beta'') \bullet \zeta = \alpha.$$

Hence $t(\gamma)=(\alpha, \beta)$, and so $t$ is surjective.

\medskip

For the second statement, $\alpha \in A$ implies $\alpha
= (f \times f)^* \delta$ for some $\delta \in \rch^*(X \times X)$.
Since $\delta^t \in \rch^*(X \times X)$, obviously
$\alpha^t = (f \times f)^* \delta^t \in A$.  Furthermore, suppose $\beta \in B$ and write $\beta^t = \alpha' + \beta'$ for
some $\alpha' \in A$, $\beta' \in B$.  Then $\beta=(\beta^t)^t =
{\alpha'}^t + {\beta'}^t$.  Because $\beta$ has a unique expression as a sum of an element of $A$ and an element of $B$  we must have ${\alpha'}^t=0$; so $\alpha'=0$, and thus $\beta^t \in B$. \qed

\endproof

The following is a computational criterion convenient for testing for membership in $A$ or $B$:   

\begin{corollary}\label{abtest}
Suppose $\gamma \in \rch^*(Y \times Y)$.  Then $\gamma \in A$ if and only if $\zeta \bullet \gamma \bullet \zeta=\gamma$ and $\gamma \in B$ if and only if $\zeta \bullet \gamma \bullet \zeta = 0$.  
\end{corollary}

\proof

If $\gamma \in A$, then $\gamma = \zeta \bullet \gamma \bullet \zeta$ by Lemma \ref{abouta}.  Conversely, suppose $\gamma=\zeta \bullet \gamma \bullet \zeta$.  Write $\gamma = \alpha + \beta$, where $\alpha \in A$ and $\beta \in B$ and $\beta=\beta' + \beta''$, where $\beta' \in B'$ and $\beta'' \in B$.  Then, using Proposition \ref{weakor}, $$\gamma = \zeta \bullet \gamma \bullet \zeta = \zeta \bullet \alpha \bullet \zeta + \zeta \bullet \beta' \bullet \zeta + \zeta \bullet \beta'' \bullet \zeta = \zeta \bullet \alpha \bullet \zeta = \alpha \in A.$$  The second statement is now clear.

\qed

\endproof

\subsection{Blowing up}

The proof of the following theorem introduces one of the main constructions used in this article.  Our techniques bear 
some resemblance to those used by Vial in \cite[Section 5]{v}.  There are, however,  two main  differences in approach.
\vskip .2cm
(i) We are blowing up varieties that are not necessarily smooth, but having isolated quotient singularities, so that our varieties are 
required to be only pseudo-smooth. This also means that we cannot make use of the blow-up exact sequence (as in, for e.g. \cite[Proposition 6.7]{fu}).
\vskip .2cm
(ii) The second difference 
 is that 
we interpret  $Y \times Y$ as the blow-up of $X \times X$ along the subscheme $S$ (as explained in Section \ref{maincon}); in contrast, the 
arguments of Vial only involves $Y$ as the blow-up of $X$ along a subvariety $T$.

\begin{theorem}\label{blowup}
If $X$ has a CK-decomposition $\ds [\Delta_X]=\sum_{i=0}^{2d} \pi_i^X$, then $Y$ has an explicit  CK-decomposition, as defined in ~\eqref{CK.for.Y}.  If the former satisfies Poincar\'{e} duality, then so does the latter.   
\end{theorem}

\begin{proof}

By Lemma \ref{compat}, $\{(f \times f)^* \pi_i^X \}_{i=0}^{2d}$ is a set of 
orthogonal idempotents in $\rch^d(Y \times Y)$. However, $\Sigma_{i=0}^{2d} (f \times f)^*(\pi_i^X) $ may not equal  $[\Delta_Y]$, so we proceed to deal with this discrepancy.  Recall $\sigma =[\Delta_Y] - \zeta =[\Delta_Y]-(f \times f)^*([\Delta _X])$.
Denote by $h_1: E_1 \incl E$, $h_2: E_2 \incl E$, $k_1: E_1 \cap E_2 \incl E_1$, and $k_2: E_1 \cap  E_2 \incl E_2$ the various inclusion maps.  
Then the sequence  
\begin{eqnarray}\label{intersection}
\ds \rch^{d-2}(E_1 \cap E_2) \stackrel{{k_1}_* + {k_2}_*}{\arr} \rch^{d-1}(E_1) \oplus \rch^{d-1}(E_2) \stackrel{{h_1}_* - {h_2}_*}{\larr} \rch^{d-1}(E) \arr 0
\end{eqnarray}
is exact, so we may write $\sigma=\tilde{j}_* ({h_1}_* \tau_1 + {h_2}_* \tau_2)$, with $\tau_i \in 
\rch^{d-1}(E_i)$, $i=1, 2$.  Since $\tilde{j} \circ h_1 = j \times 1$ and 
$\tilde{j} \circ h_2 = 1 \times j$, we have $\sigma = (j \times 1)_* \tau_1 + (1 
\times j)_* \tau_2$.  We stress that this is the only part of our construction which involves a choice of cycles.  

\medskip \medskip

By Proposition \ref{dstrans}, we have also $\sigma=\sigma^t = (1 \times j)_* \tau_1^t + (j \times 1)_* \tau_2^t$; thus, 
$$\sigma = \frac{1}{2}[ (j \times 1)_* \tau_1 + (1 \times j)_* \tau_2] + \frac{1}{2} [ (1 \times j)_* \tau_1^t + (j \times 1)_* \tau_2^t)] = (j \times 1)_* \frac{1}{2}(\tau_1 +\tau_2^t) + (1 \times j)_* \frac{1}{2}(\tau_1 + \tau_2^t)^t.$$

This calculation shows that we may replace $(\tau_1, \tau_2)$ with $\ds (\frac{1}{2}(\tau_1 + \tau_2^t), \frac{1}{2}(\tau_1^t + \tau_2))$, and thus assume without loss of generality that $\tau_2=\tau_1^t$.

\medskip

Let $\ell \in \rch^1(Z)$ be the class of a generic
hyperplane, and for convenience, set $\ell_i=j_*(\ell^{i-1}) \in \rch^i(Y)$ for $1
\leq i \leq d$.  From the projective bundle formula, we have $\tau_1 =
\sum_{i=0}^{d-1} \ell^i \times a_{d-i-1}$, where $a_{d-i-1} \in
\rch^{d-i-1} (Y)$.  Define $a_d=0$, $\ell_0=0$, $\eta_0=0$ and
$\eta_i=(j \times 1)_* (\ell^{i-1} \times a_{d-i}) = \ell_i \times
a_{d-i}$ for $1 \leq i \leq d$.  If we set $\theta_i =
\eta_{d-i}^t$, then $\eta_i$ is a product cycle of type $(i, d-i)$
when $1 \leq i \leq d$ and $\theta_i$ is a product cycle of type
$(i,d-i)$ when $0 \leq i \leq d-1$.      
Finally, define $\gamma_i=\eta_i + \theta_i$ for $0 \leq i \leq d$.
By construction, we have $\gamma_i^t = \gamma_{d-i}$.

\begin{lemma}
$a_0=0$.  
\end{lemma}

\proof

Since $a_0 \in \rch^0(Y)$, we have $a_0=c[Y]$ for some $c \in \ratls$.  
Observe first that $$(f \times f)_*(j \times 1)_* \tau_1 = (1 \times
f)_* \sum_{m=0}^{d-1} (f \times 1)_* (j \times 1)_*  (\ell^m \times
a_{d-m-1}) = (1 \times f)_* \sum_{m=0}^{d-1} (i \times 1)_* (g \times
1)_* (\ell^m \times a_{d-m-1})$$ $$= (1 \times f)_* \sum_{m=0}^{d-1} i_* g_*(\ell^m) \times a_{d-m-1}.$$
For reasons of dimension, $g_*(\ell^m)=0$ when $0 \leq m \leq d-2$ and
$i_* g_* \ell^{d-1}=x \in \rch^d(X)$, so \\$(f \times f)_* (j \times
1)_* \tau_1 = x \times a_0 = c (x \times [Y])$ and similarly $(f
\times f)_* (1 \times j)_* \tau_2 = c([Y] \times x)$.   

\medskip

By construction, $(f \times f)_* \sigma=0$; thus, $(f \times f)_* (j
\times 1)_* \tau_1 + (f \times f)_* (1 \times j)_* \tau_2 = c( x
\times [Y] +  [Y] \times x) = 0$.  Since $x \times [Y]$ is a
projector which is orthogonal to $[Y] \times x$, it follows that $(x
\times [Y]) \bullet (c ( x \times [Y] +  [Y] \times x)) = c(x
\times [Y])=0$.  Finally, since $\Delta_Y^*(x \times Y) = x \cdot [Y] =
x \neq 0$, we must have $c=0$, and hence $a_0=0$. \qed 

\medskip

Returning to the proof of Theorem \ref{blowup}, we note the following important facts:
\begin{eqnarray}\label{inb}
\theta_0=\eta_d=0 \mbox{ and } \eta_i \in B_d', ~ \theta_i \in B_d'' \mbox{ for } 0 \leq i \leq d.
\end{eqnarray}

\medskip

One easily checks that $\sum_{j=0}^{d} \gamma_j = \sigma$.  Since $\gamma_i$ is a product cycle of type $(i, d-i)$, Lemma \ref{orthoprod} implies $\gamma_i \bullet \gamma_j=0$ when $i \neq j$.  
In particular, we have:
\begin{eqnarray}\label{gammass1}
\sigma \bullet \gamma_i = \gamma_i \bullet \gamma_i = \gamma_i \bullet \sigma 
\end{eqnarray}

for $i$, $0 \leq i \leq d$.  Thus, $\sigma = \sigma \bullet \sigma = \sum_{j=0}^d \gamma_j \bullet \gamma_j$, and so we have $\sum_{j=0}^d [\gamma_j \bullet \gamma_j  - \gamma_j]=0$, where the term in brackets is a product cycle of type $(j, d-j)$.  Composing with $\gamma_i$ on the left, we  conclude:
\begin{eqnarray}\label{twothree}
\gamma_i  \bullet \gamma_i \bullet \gamma_i - \gamma_i \bullet \gamma_i = 0.
\end{eqnarray}

Now by (\ref{gammass}), $\sigma \bullet (\gamma_i  \bullet \gamma_i - \gamma_i) = (\sigma \bullet \gamma_i) \bullet \gamma_i - \sigma \bullet \gamma_i = \gamma_i \bullet \gamma_i \bullet \gamma_i - \gamma_i \bullet \gamma_i = 0$, and similarly $(\gamma_i  \bullet \gamma_i - \gamma_i) \bullet \sigma = 0$.  Thus, 
$$\gamma_i \bullet \gamma_i  - \gamma_i = (\zeta + \sigma) \bullet (\gamma_i \bullet \gamma_i - \gamma_i) \bullet (\zeta + \sigma) = \zeta \bullet (\gamma_i \bullet \gamma_i - \gamma_i) \bullet \zeta,$$
which by (\ref{gammass1}), equals
$$\zeta \bullet \sigma \bullet \gamma_i \bullet \zeta + \zeta \bullet (\eta_i + \theta_i) \bullet \zeta.$$

The first term vanishes because $\zeta \bullet \sigma =0$ and the second vanishes because $\eta_i \in B'$ and $\theta_i \in B''$.  Thus, we have $\gamma_i \bullet \gamma_i = \gamma_i$, and so $\gamma_i$ is a projector satisfying
\begin{eqnarray}\label{gammass}
\sigma \bullet \gamma_i = \gamma_i = \gamma_i \bullet \sigma. 
\end{eqnarray}
Furthermore, for $i$ and $j$, $0 \leq i \leq d$ and $0 \leq j \leq 2d$, we have:
$$\gamma_i \bullet (f \times f)^* \pi^X_j = (\gamma_i \bullet \sigma) \bullet (\zeta \bullet (f \times f)^* \pi^X_j \bullet \zeta) = \gamma_i \bullet (\sigma \bullet \zeta) \bullet (f \times f)^* \pi^X_j \bullet \zeta= 0$$

and similarly $(f \times f)^* \pi^X_j \bullet \gamma_i=0$.

\medskip

Finally, define for $0 \leq j \leq 2d$, 
\begin{equation}
 \label{CK.for.Y}
\delta_j = \left\{\begin{array}{cc} \gamma_{d-\frac{j}{2}} & \mbox{ if } j \mbox{ is even} \\ 0 & \mbox{ if } j \mbox{ is odd} \end{array} \right. ~ ~  \mbox{ and }  ~ ~ ~ \pi^Y_j = (f \times f)^* \pi^X_j + \delta_j.
\end{equation}
\bigskip

The computations above show that $[\Delta_Y]= \sum_{j=0}^{2d}(f \times f)^*(\pi_j^X)+ \sigma = \sum_{j=0}^{2d}  \pi_j^Y$
satisfies properties (i) and (ii) in the definition of CK-decomposition.  The construction shows that $\delta_j^t = \delta_{2d-j}$ for all $j$; hence, the assertion about Poincar\'{e} duality follows from the definition of the $\pi_j^Y$.

\medskip

It remains to show that for any Weil cohomology theory 
$H^*$ and every $j$, $0 \leq j \leq 2d$, $cl_{Y \times Y}(\pi^Y_j)$ is the 
$(2d-j, j)$ \kuns component of $[\Delta_Y] \in H^{2d}(Y \times Y; \ratls)$.  
Using the \kuns isomorphism to make the identification $H^{2d}(Y \times Y; 
\ratls) \cong  \directsum_{i=0}^{2d} H^{2d-i}(Y; \ratls) \otimes_{\ratls} 
H^i(Y; \ratls)$, it suffices to show that $cl_{Y \times Y}(\pi_j^Y)  \in 
H^{2d-j}(Y; \ratls) \otimes_{\ratls} H^j(Y; \ratls)$.  

\medskip

Now $\pi_j^X$ is a projector in the original Chow-\kuns composition for $X$; 
so  $cl_{X \times X} (\pi_j^X) \in H^{2d-j}(X; \ratls) \otimes_{\ratls} H^j(X; \ratls)$.
Hence, using properties of the cycle map from the definition of Weil cohomology 
(see for example, \cite[Section 3]{kl}),  we have $cl_{Y \times Y} (f \times f)^*  
\pi^X_j  = (f \times f)^* cl_{X \times X} (\pi_j^X) \in H^{2d-j}(Y;\ratls) 
\otimes_{\ratls} H^j(Y; \ratls)$.  Moreover, $\delta_j=\gamma_{d-j/2}$ is a product 
cycle of type $(d-j/2, j/2)$; hence $\gamma_{d-j/2} = \sum_{m=0}^r  
\lambda_m \times \mu_m$, where $\lambda_m \in \rch^{d-j/2}(Y)$ and $\mu_m \in 
\rch^{j/2}(Y)$.  Again using properties of the cycle map,  

$$cl_{Y \times Y}  (\gamma_{d-j/2}) = \sum_{m=0}^r cl_{Y \times Y} (\lambda_m 
\times \mu_m) = \sum_{m=0}^r cl_Y(\lambda_m) \otimes cl_Y(\mu_m) \in 
H^{2d-j}(Y;\ratls) \otimes_{\ratls} H^j(Y; \ratls).$$  

Thus, regardless of whether $j$ is odd or even, $cl_{Y \times Y}(\pi^Y_j) \in 
H^{2d-j}(Y; \ratls) \otimes_{\ratls} H^j(Y; \ratls)$.  

\qed

\end{proof}

\subsection{Refined projectors}

In the construction of the Chow-\kuns projectors, our interest was focused on the sums $\gamma_i = \eta_i + \theta_i$ as projectors in $\rch^d(Y \times Y)$.  In subsequent sections, however, we will need to use the fact, established below, that $\eta_i$ and $\theta_i$ are themselves mutually orthogonal projectors.  Before proceeding any further, we need to make some slight modifications to our definitions in certain cases, the reasoning being that we wish to avoid the situation in which $\eta_i$ and $\theta_i$ are nonzero constant multiples of each other.
To this end, if $0< i < d/2$ and $\eta_i$ and $\theta_i$ happen to be nonzero constant multiples
of each other, replace $(\eta_i, \theta_i)$ with $(\eta_i + \theta_i,
0)$ and replace $(\eta_{d-i}, \theta_{d-i})$ with $(0, \eta_{d-i} +
\theta_{d-i})$.  This change alters the definition of the $a_i$, but it does not change $\gamma_i$ or $\gamma_{d-i}$, nor does it disturb the duality relation $\eta_i^t=\theta_{d-i}$.   Now if $d$ is even, $\eta_{d/2}^t= \theta_{d/2}$.  In view of this, the only way for
$\eta_i$ and $\theta_i$ to be nonzero constant multiples of
each other (after the modification described above) is when $d$ is even, $i=d/2$ and $\eta_{d/2}=\theta_{d/2}$  We will show in Proposition \ref{mvalues} that this situation is impossible.  

\begin{lemma}\label{ellpowers}
For $i$, $1 \leq i \leq d-1$, $\ell_i \times \ell_{d-i} \neq 0$.  
\end{lemma}

\proof

If $\ell_i \times \ell_{d-i}=0$, then ${\Delta}_Y^*(\ell_i \times \ell_{d-i}) = \ell_i \cdot \ell_{d-i} =0$.  However, by the projection formula and the self-intersection formula, $$\ell_i \cdot \ell_{d-i} = j_* \ell^{i-1} \cdot j_* \ell^{d-i-1} = j_*(\ell^{i-1} \cdot j^* j_*\ell^{d-i-1}) = j_*(\ell^{i-1} \cdot \ell^{d-i})= j_*(\ell^{d-1}).$$ This is a zero cycle on $Y$ of degree one, so it cannot be zero.  \qed

\begin{proposition}\label{mvalues}

For all $i$, $0 \leq i \leq d$, $$\eta_i \bullet \eta_i = \eta_i, ~ \theta_i \bullet \theta_i = \theta_i, ~ \eta_i \bullet \theta_i = \theta_i \bullet  \eta_i = 0.$$  

Moreover, $m(\ell_i, a_{d-i}) =m(a_{d-i}, \ell_i) = m(\ell_i,
\ell_{d-i})=1$, $m(a_i, a_{d-i})=0$, where the $m(- , -)$ are the rational numbers defined in Proposition \ref{orthoprod}.  For all $i$ and $j$,  $a_i \times a_j=0$.

\end{proposition}

\proof

Direct calculation using the self-intersection formula shows that 
$$m(\ell_i, \ell_{d-i}) = m(j_* \ell^{i-1}, j_* \ell^{d-i-1}) = {p_{\emptyset}^{YY}}^* {p_{\emptyset}^Y}_* (j_* \ell^{i-1} \cdot j_* \ell^{d-i-1})  = {p_{\emptyset}^{YY}}^* {p_{\emptyset}^Y}_* j_* (j^* \ell^{i-1} \cdot \ell^{d-i-1})$$ $$= {p_{\emptyset}^{YY}}^* {p_{\emptyset}^Y}_* j_* (\ell^i \cdot \ell^{d-i-1})  =  {p_{\emptyset}^{YY}}^* {p_{\emptyset}^Y}_* j_* (\ell^{d-1})=1$$
for $i$, $0 \leq i \leq d$.  Moreover, since $\eta_i$ and $\theta_i$ are product cycles on $Y \times Y$,
Lemma \ref{orthoprod} implies that $\eta_i \bullet \eta_i = s \eta_i$ and $\theta_i \bullet \theta_i = t \theta_i$ for some $s,t \in \ratls$.  
Now $\eta_i \in B'$ and $\theta_i \in B''$ by (\ref{inb}), so by Proposition \ref{weakor}, we have 
$\eta_i \bullet (f \times f)^*\pi_j = 0$ and $(f \times f)^* \pi_j \bullet \theta_i = 0$ for $0 \leq j \leq 2d$.  Using Lemma \ref{orthoprod}, we have 
\begin{align}
   \label{key.rels.1}
\eta_i \bullet \gamma_i &= \eta_i \bullet \sigma = \eta_i \bullet [\Delta_Y] - \eta_i \bullet \sum_{j=0}^{2d} (f\times f)^* \pi_j = \eta_i, \mbox{ and similarly}\\
\gamma_i \bullet \theta_i &= \theta_i. \notag
\end{align}
\vskip .2cm
 So on one hand, $\eta_ i \bullet \theta_i = \eta_i
\bullet \gamma_i - \eta_i \bullet \eta_i = (1-s) \eta_i$, but also
$\eta_i \bullet \theta_i = \gamma_i \bullet \theta_i - \theta_i \bullet \theta_i = (1-t) \theta_i$.  Hence $(1-s) \eta_i = (1-t) \theta_i$.  

\medskip

First suppose $\eta_i$ and $\theta_i$ are not nonzero constant
multiples of each other.  It must be the case that $s=1$ or $t=1$.  If $s=1$, then $(1-t) \theta_i=0$, so $t=1$ or
$\theta_i=0$; but in the latter case, we may still assume $t=1$.  If
$t=1$, we may similarly conclude that $s=1$.  Thus, $s=t=1$, and so $\eta_i \bullet \eta_i = \eta_i$, $\theta_i \bullet \theta_i=\theta_i$ and $\eta_i \bullet \theta_i = 0$.  Now $\theta_i \bullet \eta_i = (\gamma_i - \eta_i) \bullet (\gamma_i - \theta_i) = \gamma_i \bullet \gamma_i - \eta_i \bullet \gamma_i - \gamma_i \bullet \theta_i + \eta_i \bullet \theta_i = \gamma_i - \eta_i - \theta_i = 0.$
Now the formulas $m(\ell_i, a_{d-i}) =m(a_{d-i}, \ell_i)=1$ follow
from the statement $\eta_i \bullet \eta_i = \eta_i$ and the symmetry
of $m(-, -)$ in its arguments.  Since $\eta_i \bullet
\theta_i=0$, we have $m(a_i, a_{d-i}) (\ell_i \times \ell_{d-i}) = 0$.
If $1 \leq i \leq d-1$, then by Lemma \ref{ellpowers}, $\ell_i \times
\ell_{d-i} \neq 0$, so we must have $m(a_i, a_{d-i})=0$.  When $i=0$ or $i=d$,
this is obvious, since $a_0=a_d=0$.  Finally, from $\theta_i \bullet
\eta_i=0$, we have $m(\ell_i, \ell_{d-i}) (a_i \times a_{d-i})=a_i
\times a_{d-i}=0$.  Then

$$a_i \times a_j = (\ell_i \times a_j) \bullet (a_i \times a_{d-i}) = 0.$$

Thus, all of asserted equations hold for $i \neq d/2$, or if $i=d/2$
and either $\eta_{d/2}=\theta_{d/2}=0$ or $\eta_{d/2} \neq \theta_{d/2}$.

\medskip

Now suppose that $d$ is even and $\eta_{d/2}=\theta_{d/2} \neq 0$.  Then $\eta_{d/2}=\theta_{d/2}=\frac{1}{2} \gamma_{d/2}$, and so $\eta_{d/2} \bullet \theta_{d/2} = \frac{1}{4} \gamma_{d/2} = \frac{1}{2} \eta_{d/2} \neq 0.$  However, by Lemma \ref{orthoprod},
$$\eta_{d/2} \bullet \theta_{d/2}  = m(\ell_{d/2}, \ell_{d/2}) a_{d/2}
\times a_{d/2} = a_{d/2} \times a_{d/2} = (\ell_{d/2 + 1} \times
a_{d/2}) \bullet (a_{d/2} \times a_{d/2-1})$$ $$=  (\ell_{d/2 + 1}
\times a_{d/2})  \bullet (a_{d/2 + 1} \times a_{d/2 -1}) \bullet (a_{d/2} \times \ell_{d/2-1}).$$
Since we have already showed that $a_{d/2 + 1} \times a_{d/2 -1} =0$, the above calculation 
forces $\gamma_{d/2}=0$, which is a contradiction.  This shows that
the condition $\eta_{d/2}=\theta_{d/2} \neq 0$ is impossible. 

 \qed
\endproof

\begin{corollary}\label{onesided}
$ $ 
\begin{enumerate}
\item 
Let $\beta_1= \ell_i \times \lambda \in \rch^{i+j}(Y \times Y)$.  Then $\beta_1 \bullet \eta_i = \beta_1$.  If $\beta_1 \in B'$ or if $a_i \times \lambda = 0$, then $\beta_1 \bullet \theta_i = 0$.   

\item
Let $\beta_2= \lambda \times \ell_j \in \rch^{i+j}(Y \times Y)$.  Then
$\theta_{d-j} \bullet \beta_2 =\beta_2$.  If $\beta_2 \in B''$ or if
$\lambda \times a_j = 0$, then $\eta_{d-j} \bullet \beta_2 = 0$.

\end{enumerate}
\end{corollary} 

\proof

We prove the first statement, the second being similar.  First, 
$$\beta_1 \bullet \eta_i = (\ell_i \times \lambda) \bullet (\ell_i \times a_{d-i})= m(\ell_i, a_{d-i}) (\ell_i \times \lambda) = \ell_i \times \lambda = \beta_1$$
By direct computation, $\beta_1 \bullet \theta_i = m(\ell_i,
\ell_{d-i})(a_i \times \lambda) =a_i \times \lambda$; so if $a_i
\times \lambda=0$, then $\beta_1 \bullet \theta_i=0$.  If $\beta_1 \in
B'$, then $\beta_1 \bullet \zeta=0$ by Proposition \ref{orthoprod} and
$\beta_1 \bullet \sigma = \beta_1 \bullet \sum_{j=0}^d \gamma_j =
\beta_1 \bullet \gamma_i$ for reasons of dimension.  Thus, $\beta_1=
\beta_1 \bullet [\Delta_Y]  = \beta_1 \bullet (\zeta + \sigma) =
\beta_1  \bullet \gamma_i = \beta_1 \bullet (\eta_i + \theta_i)= \beta_1 + \beta_1 \bullet \theta_i$.
It follows that $\beta_1 \bullet \theta_i=0$ in this case also.   \qed

\endproof

\begin{corollary}\label{ideal}
$B$ is a two-sided ideal of $\rch^*(Y \times Y)$.  
\end{corollary}

\proof

We need to check that for all $\alpha \in A$, and $\beta_1, \beta_2 \in B$, the elements $\alpha \bullet \beta_1$, $\beta_2 \bullet \alpha$, and $\beta_1  \bullet \beta_2$ are in $B$.  For the first, simply note that because $\zeta \in A$, $\zeta \bullet (\alpha \bullet \beta_1) \bullet \zeta = \zeta \bullet \alpha \bullet (\beta_1 \bullet \zeta) = 0$ by Proposition \ref{weakor}.  By Corollary \ref{abtest}, $\alpha \bullet \beta_1 \in B$.  The argument showing $\beta_2 \bullet \alpha \in B$ is similar.  

\medskip

We will show that if $\beta_1, \beta_2 \in B$, then in fact $\beta_1 \bullet \beta_2=0$, which is clearly in $B$.  By linearity in each factor, we may assume that $\beta_1, \beta_2$ are homogeneous elements with respect to the grading on $\rch^*(Y \times Y)$, i.e. $\beta_1 \in \rch^k(Y \times Y)$ and $\beta_2 \in \rch^l(Y \times Y)$ for some $k,l$.  For $i=1, 2$, write $\beta_i = \beta_i' + \beta_i''$, where $\beta_i \in B'$ and $\beta_i'' \in B''$.  Then 
$$\zeta \bullet (\beta_1 \bullet \beta_2)  \bullet \zeta = \zeta \bullet \beta_1' \bullet \beta_2'  \bullet \zeta + \zeta \bullet \beta_1'' \bullet \beta_2'  \bullet \zeta + \zeta \bullet \beta_1' \bullet \beta_2''  \bullet \zeta + \zeta \bullet \beta_1'' \bullet \beta_2''  \bullet \zeta.$$

The first, second, and fourth terms are zero by Proposition \ref{weakor}, so we may assume without loss of generality that 
$\beta_1 \in B'$ and $\beta_2 \in B''$.  By the projective bundle formula, $\beta_1$ is a sum of elements of 
the form $\ell_i \times b_{k-i}$ and $\beta_2$ is a sum of elements of the form $c_{l-j} \times \ell_j$, 
where $1 \leq i, j \leq d$ and $b_m, c_n \in \rch^*(Y)$.  Using linearity again, we reduce to the 
case  $\beta_1=\ell_i \times b_{k-i}$, and $\beta_2 = c_{l-j} \times \ell_j$.  Then
 $\beta_1 \bullet \beta_2 = (\beta_1 \bullet \eta_i) \bullet (\theta_{d-j} \bullet \beta_2)$ by Corollary \ref{onesided}.  
If $i=d-j$, then $\eta_i \bullet \theta_{d-j}=0$ by Proposition \ref{mvalues}.  If $i \neq d-j$, 
then $\eta_i \bullet \theta_{d-j} = \eta_i \bullet \gamma_i \bullet \gamma_{d-j} \bullet \theta_{d-j} = 0$ where the first equality is 
by ~\eqref{key.rels.1} and the second equality is by the orthogonality of $\gamma_i$ and $\gamma_{d-j}$ as shown in Lemma ~\ref{orthoprod}. 
In either case, we have $\beta_1 \bullet \beta_2 = 0.$ \qed

\endproof

\bigskip

\subsection{Blowing down}

We now have the tools to prove the converse of Theorem \ref{blowup}.

\medskip

By Proposition \ref{dstrans}, we may write every $\delta \in \rch^i(Y \times Y)$ uniquely as $(f \times f)^* (f \times f)_* \delta + b_{\delta}$, where $b_{\delta} \in B$.  As we will need to use a similar argument in Section \ref{lefschetz}, we phrase the next result in somewhat general form.

\begin{lemma}\label{drop}
With hypotheses as in Theorem \ref{blowup}, suppose $\delta_i \in \rch^*(Y \times Y)$, $1 \leq i \leq 4$, satisfy $\delta_1 \bullet \delta_2=\delta_3 \bullet \delta_4$.  Then $(f \times f)_* \delta_1 \bullet (f \times f)_* \delta_2 = (f \times f)_* \delta_3 \bullet (f \times f)_* \delta_4.$
\end{lemma}

\proof

For each $i=1, \ldots, 4$, write $\delta_i = (f \times f)^* (f \times f)_* \delta_i + b_i$, where $b_i \in B$.  Substituting these expression into the assumption $\delta_1 \bullet \delta_2 = \delta_3 \bullet \delta_4$ yields
$$(f \times f)^* (f \times f)_* \delta_1 \bullet (f \times f)^* (f \times f)_* \delta_2 + (f \times f)^* (f \times f)_* \delta_1 \bullet b_2 + b_1 \bullet (f \times f)^* (f \times f)_* \delta_2 + b_1 \bullet b_2$$
$$= (f \times f)^* (f \times f)_* \delta_3 \bullet (f \times f)^* (f \times f)_* \delta_3 + (f \times f)^* (f \times f)_* \delta_3 \bullet b_4 + b_3 \bullet (f \times f)^* (f \times f)_* \delta_4 + b_3 \bullet b_4,$$

which by Lemma \ref{compat} may be rewritten 
$$(f \times f)^* ((f \times f)_* \delta_1 \bullet (f \times f)_* \delta_2) + (f \times f)^* (f \times f)_* \delta_1 \bullet b_2 + b_1 \bullet (f \times f)^* (f \times f)_* \delta_2 + b_1 \bullet b_2$$
$$= (f \times f)^* ((f \times f)_* \delta_3 \bullet (f \times f)_* \delta_3) + (f \times f)^* (f \times f)_* \delta_3 \bullet b_4 + b_3 \bullet (f \times f)^* (f \times f)_* \delta_4 + b_3 \bullet b_4.$$

The first summand on each side is in $A$, while Corollary \ref{ideal}
shows that the other three summands are in $B$.  By Proposition
\ref{dstrans}, $\rch^*(Y \times Y)$ is the internal direct sum of $A$
and $B$, so we must have $(f \times f)^*((f \times f)_* \delta_1
\bullet (f \times f)_* \delta_2) = (f \times f)^* ((f \times f)_*
\delta_3 \bullet  (f \times f)_* \delta_4).$  Finally, the projection
formula implies that $(f \times f)_* (f \times f)^*$ is the identity
map, so $(f \times f)^*$ is injective and the assertion follows.   

\qed
\endproof

By taking $\delta_4=[\Delta_Y]$, we immediately deduce:

\begin{corollary}\label{pushdown}
If $\delta_1, \delta_2 \in \rch^*(Y \times Y)$, then $(f \times f)_*(\delta_1 \bullet \delta_2)=(f \times f)_* \delta_1 \bullet (f \times f)_* \delta_2$.  In particular, if $\delta_1 \bullet \delta_2=0$, then $(f \times f)_* \delta_1 \bullet (f \times f)_* \delta_2 = 0$.  
\end{corollary}

As discussed in \cite[1.4]{sv}, given a surjective morphism $g: V \arr W$ of projective varieties, one may 
identify the (Chow) motive of $W$ with a direct summand of the Chow motive of $V$.  In particular, there is a section $s \in \rch^d(W \times V)$ such that $[\Gamma_g]  \bullet s = [\Delta_W]$.  If one begins with a 
Chow-\kuns ~ decomposition $[\Delta_V] = \sum_{j=0}^{2d} \pi_j^V$ for $V$, one might attempt to construct a Chow-\kuns decomposition on $W$ by 
considering the elements $[\Gamma_g] \bullet \pi_j^V \bullet s$, $0 \leq j \leq 2d$.  Because the $\pi_j^V$ are central modulo homological equivalence (assuming some choice of Weil cohomology theory), the cohomology classes of the elements $\ds [\Gamma_g] \bullet \pi_j^Y \bullet s,  0 \leq j \leq 2d$ actually define a {\em \kun} decomposition on $W$, but it does not follow that these elements are idempotents when considered with respect to rational equivalence.  

\smallskip

In the case of blowing down in our special case as considered in ~\ref{key.assumpns}, however, this construction actually yields a Chow-\kuns decomposition as shown in the
following Corollary.

\begin{corollary}\label{blowdown}
With hypotheses as in Section \ref{maincon}, suppose
$[\Delta_Y]=\sum_{j=0}^{2d} \pi_j^Y$ is a Chow-\kun ~ 
decomposition for $Y$.  Then $[\Delta_X]=\sum_{j=0}^{2d} (f \times f)_* 
\pi_j^Y$ is a CK-decomposition for $X$.  Moreover, if the 
CK-decomposition for $Y$ satisfies Poincar\'{e} duality, then so does 
the CK-decomposition for $X$.
\end{corollary}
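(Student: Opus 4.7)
The plan is to derive the corollary from Corollary~\ref{dsdecomp} together with the algebraic structure already set up in the proof of Theorem~\ref{blowup}. The key input is the ring decomposition $\rch^d(Y \times Y) = A \oplus B$ into mutually orthogonal two-sided ideals. Explicitly, for each $j$ write $\nu_j = a_j + b_j$ with $a_j = (f \times f)^*(f \times f)_* \nu_j \in A$ and $b_j = \nu_j - a_j \in B$; this immediately yields the asserted formula $\nu_j = (f \times f)^*(f \times f)_* \nu_j + b_j$ with $b_j \in B$.

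Setting $\pi_j := (f \times f)_* \nu_j$ (so that $a_j = (f \times f)^* \pi_j$), I would then check that $\{\pi_j\}_{j=0}^{2d}$ is a Chow-\kuns decomposition for $X$. The summation identity $\sum_j \pi_j = [\Delta_X]$ reduces to $(f \times f)_*[\Delta_Y] = [\Delta_X]$, which holds since $f$ is birational of degree one. For orthogonality and idempotency, expand
\[ \delta_{ij}\,\nu_i \;=\; \nu_i \bullet \nu_j \;=\; (a_i + b_i) \bullet (a_j + b_j). \]
By Proposition~\ref{orthprin} the cross terms $a_i \bullet b_j$ and $b_i \bullet a_j$ vanish, and since $A \cap B = 0$ and both pieces remain in their respective ideals, we may separate components to conclude $a_i \bullet a_j = \delta_{ij} a_i$. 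Proposition~\ref{surprise} applied with $m=1$ gives $a_i \bullet a_j = (f \times f)^*(\pi_i \bullet \pi_j)$; applying the left inverse $(f \times f)_*$ of $(f \times f)^*$ (available because $f \times f$ has degree one) yields $\pi_i \bullet \pi_j = \delta_{ij}\,\pi_i$.

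For the cohomological \kuns condition, I would use that the cycle map is compatible with proper pushforward and that $(f \times f)_*$ on $H^{2d}$ corresponds to $f_* \otimes f_*$ under the \kuns isomorphism; since $f$ is proper of relative dimension zero, $f_*$ preserves cohomological degree, so the $(2d-j,j)$ bigrading is preserved. Hence $cl_{X \times X}(\pi_j) = (f \times f)_* cl_{Y \times Y}(\nu_j)$ is the $(2d-j,j)$ \kuns component of $[\Delta_X]$. Finally, Poincar\'e duality transfers because the exchange $s$ commutes with $f \times f$, so transposition commutes with $(f \times f)_*$; thus $\pi_j^t = ((f \times f)_* \nu_j)^t = (f \times f)_* \nu_j^t = (f \times f)_* \nu_{2d-j} = \pi_{2d-j}$.

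The main obstacle is the cohomological step: confirming that proper pushforward along the birational morphism $f \times f$ preserves the \kuns bigrading for an arbitrary Weil cohomology theory, in spite of the fact that $X$ is only pseudo-smooth. The cleanest way around this is to pass to the smooth covers $X' \to X$ and $Y' \to Y$ and use the identification $H^*(X;\ratls) = H^*(X';\ratls)^G$, where the \kuns formula and functoriality of proper pushforward are standard, and then descend by taking $G$-invariants.
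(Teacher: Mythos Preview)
Your proposal is correct and follows essentially the same route as the paper: both use the ring decomposition $\rch^d(Y\times Y)=A\oplus B$ from Corollary~\ref{dsdecomp}, kill the cross terms via Proposition~\ref{orthprin}, and then use Proposition~\ref{surprise} together with the fact that $(f\times f)^*$ has $(f\times f)_*$ as a left inverse (equivalently, is injective) to transfer orthogonality and idempotency to the $\pi_j$. The only cosmetic difference is that the paper deduces idempotency from orthogonality plus $\sum_j\pi_j=[\Delta_X]$ rather than from the $A$-component of $\nu_i\bullet\nu_i=\nu_i$, and it leaves the cohomological \kuns condition and Poincar\'e duality as ``clear from the construction'' where you spell them out.
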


\begin{proof}

The formula for $[\Delta_X]$ follows by applying $(f \times f)_*$ to
the expression for $[\Delta_Y]$, noting that $f \times f$ has
degree $1$.  Now if $i \neq j$, we have $\pi_i^Y \bullet \pi_j^Y=0$, so
$(f \times f)_* \pi_i^Y \bullet (f \times f)_* \pi_j^Y=0$ by Corollary \ref{pushdown}.
Finally, $$(f \times f)_* \pi_i^Y  \bullet (f \times f)_* 
\pi_i^Y = ([\Delta_X] - \sum_{j \neq i} (f \times f)_* \pi_j^Y) \bullet (f \times f)_* \pi_i^Y = (f \times f)_* \pi_i^Y.$$
The remaining assertions are clear from the construction. \qed
\end{proof}

\section{\bf Application to Murre's Conjectures}\label{app1}

The goal of this section is to prove that each of Murre's Conjectures holds for $Y$ if and only if it holds for $X$.  The case of Murre's Conjecture \textbf{A} (existence of a Chow-\kun ~ 
decomposition) was completed in the previous section.  In the interest of 
making the proofs easier to follow, we use Greek letters for elements of 
$\rch^*(Y \times Y)$ or $\rch^*(X \times X)$ and Roman letters for elements of 
$\rch^*(Y)$ or $\rch^*(X)$.  In order to study the action of correspondences on Chow groups, we will need some results analogous to Lemma \ref{compat} and Proposition \ref{weakor}.

\begin{lemma}\label{smallsurprise}
If $\alpha \in \rch^*(X \times X)$ and $x\in \rch^*(X)$, then $$f^*(\alpha 
\bullet x) = (f \times f)^*(\alpha) \bullet f^*x.$$  

\end{lemma}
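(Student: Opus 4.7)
My plan is to unfold both sides of the identity using the definition of correspondence composition and then reduce the resulting equality to a base-change statement for the blow-up map $f$. Viewing $b \in \rch^*(X)$ as a correspondence $\spec k \to X$, I would first rewrite the two sides as
\[ \alpha \bullet b = {p_2^{XX}}_*(\alpha \cdot {p_1^{XX}}^* b), \qquad (f \times f)^* \alpha \bullet f^* b = {p_2^{YY}}_*\bigl((f \times f)^* \alpha \cdot {p_1^{YY}}^* f^* b\bigr). \]
Functoriality of pullback then collapses the right side to ${p_2^{YY}}_* (f \times f)^*(\alpha \cdot {p_1^{XX}}^* b)$, reducing the claim to the base-change identity
\[ (\star) \qquad {p_2^{YY}}_* (f \times f)^* = f^* {p_2^{XX}}_*. \]

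To prove $(\star)$, my plan is to factor $f \times f = (1 \times f) \circ (f \times 1)$ and handle the two pullbacks in turn. For $(1 \times f)^*$, I would use that the square
\[ \begin{CD} X \times Y @>{p_2^{XY}}>> Y \\ @V{1 \times f}VV @VV{f}V \\ X \times X @>{p_2^{XX}}>> X \end{CD} \]
is Cartesian, so base change should give $f^* {p_2^{XX}}_* = {p_2^{XY}}_* (1 \times f)^*$. For $(f \times 1)^*$, I would observe that $p_2^{XY} \circ (f \times 1) = p_2^{YY}$; since $f$ is birational (being a blow-up), so is $f \times 1$, and the projection formula forces $(f \times 1)_*(f \times 1)^* = \mathrm{id}$. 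Hence ${p_2^{YY}}_* (f \times 1)^* = {p_2^{XY}}_* (f \times 1)_* (f \times 1)^* = {p_2^{XY}}_*$. Composing the two pieces yields $(\star)$.

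The main obstacle will be justifying the Cartesian base-change formula in the pseudo-smooth setting, which is not among the basic compatibilities listed in \cite[Example 16.1.13]{fu}. I expect this to go through by exactly the same method as Lemma \ref{gysin}: lift the Cartesian square to smooth covers $X' \to X$ and $Y' \to Y$, where base change is classical (\cite[Prop. 6.6(c)]{fu}), and then descend via the isomorphisms $q_{V \times W}^*: \rch^*(V \times W) \cong \rch^*(V' \times W')^{G \times G}$ through the cube diagram chase used in the proof of Lemma \ref{gysin}. Once that base-change instance is in hand, the rest of the argument is a short manipulation.
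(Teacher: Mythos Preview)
Your proposal is correct and follows essentially the same approach as the paper: unfold the definition of $\bullet$, factor $f \times f$ into a ``fiberwise'' piece and a ``base'' piece, handle one via a Cartesian base-change identity (justified in the pseudo-smooth setting exactly as you describe, by the cube argument of Lemma~\ref{gysin}), and collapse the other via $(f \times 1)_*(f \times 1)^* = \mathrm{id}$ using that $f$ is birational. The only cosmetic difference is the order of the factoring: the paper passes through $Y \times X$ (applying base change first, then the degree-one trick), whereas you pass through $X \times Y$ (degree-one trick first, then base change); the content is identical.
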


\proof

Viewing $x$ as an element of $\corr(k,X)$, we compute using Lemma \ref{dict}.  Keeping in mind that $[\Gamma_f] \bullet [\Gamma_f^t]$ is multiplication by $\deg ~ f = 1$, we have:

$$(f \times f)^* \alpha \bullet f^* x = [\Gamma_f^t] \bullet \alpha \bullet [\Gamma_f] \bullet [\Gamma_f^t] \bullet x = 
[\Gamma_f^t] \bullet \alpha \bullet x  = f^*(\alpha \bullet x).$$ \qed

\endproof

Recall that $T$ denotes the center of the blow-up of $X$ and that $Z$ is the exceptional divisor in $Y$. 
For $i$, $0 \leq i \leq d$, define subgroups $C_i= f^* \rch^i(X)$ and $D_i = 
j_*(\ker g_*: \rch^{i-1}(Z) \arr \rch^{i-d}(T))$ of $\rch^i(Y)$.  Observe that 
$D_i=\rch^{i-1}(Z) \cong \ratls$ if $i < d$ and $D_i=0$ if $i=d$.
Then 

\begin{proposition}\label{singleorth}
$\rch^i(Y)$ is the internal direct sum of $C_i$ and $D_i$.  Furthermore, if 
$\alpha \in A$ and $d_i \in D_i$, then $\alpha \bullet d_i=0$, and if $\beta \in B$ and $c \in C_i$, then $(\beta \bullet \sigma) \bullet c_i=0$.  
\end{proposition}

\begin{proof}

Let $V_X=X-T$ and $V_Y=Y - Z$.  Then $V_X \cong V_Y$, so localization gives a commutative diagram with exact rows:

\xymatrix{ 
& \rch^{i-1}(Z) \ar[rr]^{j_*} \ar[dd]^{g_*} & & \rch^i(Y) \ar[rr] 
\ar[dd]^{f_*} & & \rch^i(V_Y) \ar[dd]^{\cong} \ar[r] & 0  \\ \\
& \rch^{i-d}(T) \ar[rr]^{i_*} & & \rch^i(X) \ar[rr] & & \rch^i(V_X) 
\ar[r] & 0  
} 

The property $\rch^i(Y) = C_i+D_i$ follows from a straightforward diagram 
chase and the fact that $i_*$ is injective.  Now write $\alpha=(f \times f)^* u$, $c_i = f^* v \in C_i$, and 
$d_i=j_* y \in D_i$.  Then 
$$\alpha \bullet d_i = (f \times f)^* u \bullet j_* y = [\Gamma_f^t] \bullet u \bullet [\Gamma_f] \bullet [\Gamma_j] \bullet y
=[\Gamma_f^t] \bullet u \bullet [\Gamma_{f \bullet j}] \bullet y  =[\Gamma_f^t] \bullet u \bullet [\Gamma_{i \bullet g}] \bullet y  =[\Gamma_f^t] \bullet u \bullet [\Gamma_i] \bullet g_* y = 0.$$ 

Therefore, if $c_i\in C_i \cap D_i$, then (regarding $c_i$ as an element of $C_i$), we have $\zeta \bullet c_i=c_i$ by Lemma \ref{smallsurprise}, but also (regarding $c_i$ an element of $D_i$), $\zeta \bullet c_i=0$ by the above calculation.  Thus, $C_i \cap D_i=\{0\}$.  

Finally, 
$$(\beta \bullet \sigma) \bullet c_i = \beta \bullet \sigma \bullet f^*v =
 \beta \bullet \sigma \bullet \zeta \bullet f^*v = 
\beta \bullet 0 \bullet f^* v =0.$$

\qed \end{proof}

From this point onward, we fix identifications $$\rch^i(Y) \cong C_i \oplus 
D_i \mbox{ and } \rch^d(Y \times Y) \cong A \oplus B$$ and thereby justify the use of ordered pair notation for elements of $\rch^*(Y)$ and $\rch^*(Y \times Y)$.  

\begin{corollary}\label{distrib}
Suppose $((f \times f)^*\alpha, \beta) \in \rch^d(Y \times Y)$ and $(f^*x, y) \in 
\rch^i(Y)$.  Then $$((f \times f)^*\alpha, \beta \bullet \sigma)
\bullet (f^*x, y) = (f^*(\alpha \bullet x), \beta \bullet \sigma
\bullet y)$$ 
\end{corollary}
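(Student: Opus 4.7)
The plan is to expand the composition bilinearly and invoke the two structural results proved immediately before. By $\Q$-bilinearity of $\bullet$,
\[
((f \times f)^*\alpha + \beta) \bullet (f^*x + y) = (f \times f)^*\alpha \bullet f^*x + (f \times f)^*\alpha \bullet y + \beta \bullet f^*x + \beta \bullet y.
\]
Lemma \ref{smallsurprise} rewrites the first term as $f^*(\alpha \bullet x)$, which lies in $A_i$ by definition. The two cross terms vanish by the orthogonality principle of Proposition \ref{singleorth}: $(f \times f)^*\alpha \in A$ annihilates $y \in B_i$, and $\beta \in B$ annihilates $f^*x \in A_i$. So the product collapses to $f^*(\alpha \bullet x) + \beta \bullet y$ in $\rch^i(Y)$.

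To justify interpreting the right-hand side as an element of $A_i \oplus B_i$, the remaining task is to check that $\beta \bullet y$ actually lies in $B_i$. This is the main obstacle, since Proposition \ref{singleorth} by itself only asserts that $B$ annihilates $A_i$, not that $B$ carries $B_i$ into itself. I would handle it by writing $\beta = (j \times 1)_*\epsilon_1 + (1 \times j)_*\epsilon_2$ with $(g \times 1)_*\epsilon_1 = 0$ and $(1 \times g)_*\epsilon_2 = 0$, and $y = j_*z$ with $g_*z = 0$, and then computing each of the two pieces $(j \times 1)_*\epsilon_1 \bullet j_*z$ and $(1 \times j)_*\epsilon_2 \bullet j_*z$ via flat base change along the Cartesian squares for $j \times 1$ and $1 \times j$, combined with the projection formula. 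Exploiting that $Z \cong \psd$ has Chow ring $\Q[\ell]/(\ell^d)$ and that $g$ is the structure map of $\psd$, one can expand $\epsilon_1$, $\epsilon_2$, and $z$ in the Künneth basis, use the vanishing of $g_*\ell^k$ for $k<d-1$ together with the hypotheses $(g \times 1)_*\epsilon_1 = 0$, $(1 \times g)_*\epsilon_2 = 0$, and $g_*z = 0$, and arrange the outcome in the form $j_*(w)$ with $g_*(w) = 0$.

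Once $\beta \bullet y \in B_i$ is established, the identity $((f \times f)^*\alpha, \beta) \bullet (f^*x, y) = (f^*(\alpha \bullet x), \beta \bullet y)$ in $A_i \oplus B_i$ follows immediately from the three preceding observations, completing the proof.
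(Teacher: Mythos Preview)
Your expansion into four terms and the treatment of the first three are exactly what the paper intends: Lemma~\ref{smallsurprise} handles $(f\times f)^*\alpha\bullet f^*x$, and Proposition~\ref{singleorth} kills the two cross terms. The paper states the corollary without proof, so this is precisely the implicit argument.

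You are also right that the statement, read as an identity in $A_i\oplus B_i$, requires $\beta\bullet y\in B_i$, and that Proposition~\ref{singleorth} alone does not give this. However, the explicit K\"unneth-basis computation you outline is unnecessary. There is a one-line argument using results already in hand. By Lemma~\ref{smallsurprise} and Proposition~\ref{singleorth}, the correspondence $(f\times f)^*[\Delta_X]\in A$ acts as the identity on $A_i$ and as zero on $B_i$; hence it is the projection of $\rch^i(Y)$ onto $A_i$. The $A_i$-component of $\beta\bullet y$ is therefore
\[
(f\times f)^*[\Delta_X]\bullet(\beta\bullet y)=\bigl((f\times f)^*[\Delta_X]\bullet\beta\bigr)\bullet y=0,
\]
since $(f\times f)^*[\Delta_X]\in A$ and $\beta\in B$ are orthogonal by Proposition~\ref{orthprin} (or by Corollary~\ref{dsdecomp}, which records that $A$ and $B$ are orthogonal two-sided ideals). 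Thus $\beta\bullet y\in B_i$ with no further work. Your longer route would succeed, but this shortcut is what makes the result a genuine corollary.
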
 
 
\begin{proposition}\label{conjb}
Murre's Conjecture \textbf{B} holds for $X$ if and only if it holds for $Y$, and similarly for Conjecture \textbf{B}'.  
\end{proposition}

\begin{proof}

We give the proof for Conjecture \textbf{B}.  First suppose $X$ has a Chow-\kun ~ decomposition \\ $[\Delta_X] = \sum_{i=0}^{2d} 
\pi_i^X$  satisfying Murre's Conjecture \textbf{B}, i.e. $\pi_{\ell} \bullet \rch^j(X)=0$ 
when $\ell<j$ or $\ell>2j$, and let $[\Delta_Y]=\sum_{i=0}^{2d} ((f
\times f)^* \pi_i^X, \delta_i)$ be the Chow-\kun ~ decomposition for $Y$
as constructed in (\ref{CK.for.Y}).
From (\ref{gammass}), the property $\delta_i = \delta_i \bullet
\sigma$ holds for all $i$.  Now  fix $j$, $0 \leq j \leq d$, and
consider $(f^* x, y ) \in\rch^j(Y)$.  By Corollary \ref{distrib}, $((f
\times f)^* \pi_{\ell}^X, {\delta}_{\ell}) \bullet (f^* x, y) =
(f^*(\pi_{\ell}^X \bullet x), {\delta}_{\ell} \bullet y)$.  If $\ell <
j$ or $\ell >2j$, then $\pi_{\ell}^X \bullet x=0$.  When $\ell$ is odd,
clearly ${\delta}_{\ell} \bullet y=0$; so assume $\ell$ is even.  Then
${\delta}_{\ell}$ is a product cycle of type $(d-\ell/2, \ell/2)$; so
it suffices to show that for any $u \in {\rch}^{d-\ell/2}(Y)$ and $v
\in {\rch}^{\ell/2}(Y)$, $(u \times v) \bullet y=0$ when $\ell< j$ or
$\ell>2j$.  Then  

$$(u \times v) \bullet y = {p_2^{YY}}_* ({p_1^{YY}}^* y \cdot {p_1^{YY}}^* u \cdot {p_2^{YY}}^* v)= {p_2^{YY}}_* {p_1^{YY}}^*(y \cdot u) \cdot v={p_{\emptyset}^Y}^* {p_{\emptyset}^Y}_* (y \cdot u) \cdot v.$$

Note that $y \cdot u\in \rch^{j+d-\ell/2}(Y)$.  If $\ell < j$, then $j 
+ d - \ell /2 > d$; so $y \cdot u= 0$.  If $\ell > 2j$, then $j- \ell/2 
< 0$; so ${p_{\emptyset}^Y}_* (y \cdot u) \in \rch^{j-\ell/2}(\spec k) = 0$.  
Thus, this Chow-\kun ~ decomposition for $Y$ satisfies Murre's Conjecture 
\textbf{B}.  

\medskip

Conversely, suppose $[\Delta_Y] = \sum_{i=0}^{2d} \pi_i^Y$ is a Chow-\kun ~ decomposition for $Y$ satisfying Murre's Conjecture
\textbf{B}.  By Proposition \ref{dstrans}, we may write $\pi_i^Y = ((f
\times f)^* \pi_i^X, \delta_i)$ for some $\pi_i^X \in \rch^d(X \times
X)$.  This means, in particular, that if $(f^*x, y) \in
\rch^{\ell}(Y)$, then $(f \times f)^*\pi_j^X 
\bullet f^* x = 0$ when $\ell < j$  or $\ell > 2j$.  By Lemma 
\ref{smallsurprise} we have $f^*(\pi_j^X \bullet x)=0$, and since $f^*$ is injective, 
$\pi_j^X \bullet x=0$.  Corollary \ref{blowdown} then guarantees that
$[\Delta_X] = \sum_{i=0}^{2d} \pi_i^X$ is a Chow-\kun ~ decomposition
for $X$ satisfying Murre's Conjecture \textbf{B}.

\qed \end{proof}

\begin{proposition}
Murre's Conjecture \textbf{C} holds for $X$ if and only if it holds for $Y$.  
Similarly, Conjecture \textbf{D} holds for $X$ if and only if it holds for $Y$.
\end{proposition}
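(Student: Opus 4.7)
The plan is to exploit the decomposition $\rch^\ell(Y) \cong A_\ell \oplus B_\ell$ from Proposition \ref{singleorth} together with the ring decomposition $\rch^d(Y \times Y) \cong A \times B$ from Corollary \ref{dsdecomp} to prove the identity $F^k \rch^\ell(Y) = f^*(F^k \rch^\ell(X))$ for every $k \geq 1$; from this, both Conjectures \textbf{C} and \textbf{D} on $Y$ will transfer to those on $X$ and vice versa.

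First, given any Chow-\kuns decomposition $[\Delta_Y] = \sum_j \rho_j$, Corollary \ref{blowdown} produces a corresponding Chow-\kuns decomposition $[\Delta_X] = \sum_j \pi_j$ with $\pi_j = (f \times f)_*\rho_j$, together with a unique expression $\rho_j = (f \times f)^* \pi_j + b_j$ with $b_j \in B$. Combined with Corollary \ref{distrib}, the action of $\rho_j$ on $\rch^\ell(Y) \cong A_\ell \oplus B_\ell$ splits as $\rho_j \bullet (f^*x, y) = (f^*(\pi_j \bullet x), b_j \bullet y)$, so the filtration on $Y$ respects this direct sum.

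The key step is to pin down the action of $b_j$ on $B_\ell$. Because each exceptional component is isomorphic to $\psd$, $B_\ell$ is finite-dimensional over $\ratls$, vanishes for $\ell = 0$ and $\ell = d$, and injects via the cycle class map into $H^{2\ell}(Y;\ratls)$ (its image being the span of the exceptional classes $j_*\ell^{\ell-1}$, nonzero by the blow-up formula in cohomology). Under the ring isomorphism of Corollary \ref{dsdecomp}, the $\{b_j\}$ are orthogonal idempotents in $B$ summing to $\sigma$; hence the induced endomorphisms of $B_\ell$ are commuting idempotents summing to the identity. Since $cl(\rho_j)$ is the $(2d-j,j)$-\kuns component and $(f \times f)^*cl(\pi_j)$ already has the correct type, $cl(b_j) \in H^{2d-j}(Y;\ratls) \otimes H^j(Y;\ratls)$. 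A standard correspondence computation shows such a class acts as zero on $H^{2\ell}(Y;\ratls)$ unless $j = 2\ell$; combined with injectivity of $B_\ell \hookrightarrow H^{2\ell}(Y;\ratls)$, this forces $b_j \bullet B_\ell = 0$ for $j \neq 2\ell$ and $b_{2\ell}$ to act as the identity on $B_\ell$.

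With this in hand, $\ker \rho_{2\ell}|_{\rch^\ell(Y)} = f^*(\ker \pi_{2\ell}|_{\rch^\ell(X)})$, and since $F^{k-1} \rch^\ell(Y) \subseteq A_\ell$ for $k \geq 2$, while on $A_\ell$ each $\rho_j$ acts as $f^*$ of the action of $\pi_j$ (via Lemma \ref{smallsurprise}), the identity $F^k \rch^\ell(Y) = f^*(F^k \rch^\ell(X))$ follows by induction on $k$. Conjecture \textbf{C} then transfers in both directions because $f^*$ is injective on rational Chow groups with $f_* f^* = \mathrm{id}$ ($f$ being birational). For Conjecture \textbf{D}, the subgroup of homologically trivial cycles in $\rch^\ell(Y)$ equals $f^*$ of the corresponding subgroup in $\rch^\ell(X)$ --- on $A_\ell$ by injectivity of $f^*$ on cohomology, and on $B_\ell$ because the exceptional classes are cohomologically nontrivial --- so, combined with $F^1 \rch^\ell(Y) = f^*(F^1 \rch^\ell(X))$, Conjecture \textbf{D} also transfers. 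The main technical obstacle is the cohomological argument in the third paragraph pinning down the $b_j$-action on $B_\ell$; once that is done, the rest reduces to routine bookkeeping.
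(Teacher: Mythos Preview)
Your approach is correct and runs parallel to the paper's: both exploit the splitting $\rch^\ell(Y)\cong A_\ell\oplus B_\ell$ together with the fact that $B_\ell$ (spanned by the class $j_*\ell^{\ell-1}$ coming from $Z\cong\psd$) injects into cohomology, and both arrive at the identity $F^k\rch^\ell(Y)=f^*F^k\rch^\ell(X)$ for $k\ge1$, from which Conjectures~\textbf{C} and~\textbf{D} transfer. The technical difference lies in how the $B_\ell$-contribution to the filtration is eliminated. You compute the K\"unneth type of $cl(b_j)$ and combine it with the injectivity of $B_\ell\hookrightarrow H^{2\ell}(Y)$ to show directly that $b_j$ annihilates $B_\ell$ for $j\neq2\ell$ and acts as the identity when $j=2\ell$. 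The paper instead invokes Murre's general lemma \cite[Lemma~1.4.4]{mu1} (that $F^1$ always lies inside the homologically trivial cycles, for \emph{any} Chow--\kuns decomposition) and then notes that $B_\ell$ meets the homologically trivial locus only in zero. Your route is more self-contained and in fact yields a little more (the action of every $b_j$ on $B_\ell$, not merely the first kernel); the paper's is shorter by outsourcing that step to a known result. A further small difference in framing: you begin from an \emph{arbitrary} Chow--\kuns decomposition on $Y$ and pass to $X$ via Corollary~\ref{blowdown}, which makes it transparent that Conjecture~\textbf{C} is being verified for all decompositions on $Y$; the paper nominally phrases the forward direction in terms of decompositions on $Y$ arising from Theorem~\ref{blowup}, though its argument works verbatim in the generality you consider.
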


\begin{proof}

Assume first that $X$ has a CK-decomposition $[\Delta_X] =
\sum_{i=0}^{2d} \pi_i^X$ satisfying Murre's 
Conjecture \textbf{C}.  Now let $[\Delta_Y]= \sum_{i=0}^{2d} (f \times f)^* 
\pi_i^X + \delta_i$ be the CK-decomposition for $Y$
constructed in the proof of Theorem \ref{blowup}.  By Proposition \ref{singleorth} and Corollary 
\ref{distrib}, we have $((f \times f)^*\pi_{\ell}^X + \delta_{\ell}) \bullet 
\rch^i (Y) =  (f \times f)^*\pi_{\ell}^X \bullet 
C_i + \delta_{\ell} \bullet D_i= f^*(\pi_{\ell}^X \bullet 
C_i) + \delta_{\ell} \bullet D_i$.  In particular, this implies 
that the filtration induced by this CK-decomposition (as defined in
Section \ref{prelim1}) is described by 

\begin{eqnarray}\label{filt}
F^m\rch^i(Y) =f^* F^m \rch^i(X) + D_{i,m}
\end{eqnarray}

where $D_i= D_{i,0} \supseteq D_{i,1} \supseteq \ldots $ is a descending chain 
of 
subgroups.  By Murre's Conjecture \textbf{C} for $X$, the term $F^m \rch^i(X)$ 
is independent of the original choice of CK-decomposition for $X$.  Also, by 
\cite[Lemma 1.4.4]{mur}, $F^1 \rch^i(Y)$ is contained in the subgroup 
$\rch^i(Y)_{hom} \subseteq \rch^i(Y)$ of cycles homologically equivalent to zero.  If $i=d$, 
then $D_i=0$; so $D_{i,j}=0$ for all $j$.  If $i<d$, then  $D_i=\rch^{i-1}(Z) 
\cong \ratls$ is a one-dimensional $\ratls$-vector space, with $\rch^{i-1}(Z)_{hom}=0$.  Hence $D_{i,j}=0$ for 
all $j \geq 1$, showing that the filtration $F^m \rch^i(Y)$ is independent of the 
original choice of CK-decomposition on $Y$.  This proof also shows 
that if Conjecture \textbf{D} holds for $X$, i.e. $F^1 \rch^i(X) = 
{\rch^i(X)}_{hom}$, then likewise $F^1 \rch^i(Y) = {\rch^i(Y)}_{hom}$.

\medskip

Conversely, suppose $Y$ has a CK-decomposition satisfying Murre's 
Conjecture \textbf{C}.    
If $[\Delta_X]=\sum_{i=0}^{2d} \pi_i^X$ is a CK-decomposition on $X$, use 
Theorem \ref{blowup} to construct a CK-decomposition $[\Delta_Y]= 
\sum_{i=0}^{2d} (f \times f)^* \pi_i^X + \delta_i$ on $Y$.  By assumption, the 
filtration (\ref{filt}) defined by this CK-decomposition is independent of the original choice 
of CK-decomposition on $X$; hence $f_*F^m\rch^i(Y) = F^m \rch^i(X)$ is 
also independent on this choice, and so Conjecture \textbf{C} holds
for $X$.  The assertion concerning Conjecture \textbf{D} follows
similarly.  

\qed \end{proof}

\section{\bf Application to Lefschetz decompositions}\label{lefschetz}

\subsection{Lefschetz type and blowing up}

The goal of this section is to prove the following result:

\begin{theorem}\label{leftheorem}
Let $f: Y \arr X$ be as in Section \ref{maincon}.  Then $Y$ is of
Lefschetz type if and only if $X$ is of Lefschetz type.
\end{theorem}

Let $d=\dim X=\dim Y$ and suppose $((\rch^{*+d}(X \times X),
\{\pi_i^X\}_{i=0}^{2d}, L_X, \Lambda_X)$ is a Lefschetz algebra.  It
follows easily from Lemma \ref{compat} that $$((f \times f)^* \rch^*(X
\times X),    \{(f \times f)^*\pi_i^X\}_{i=0}^{2d}  , (f \times f)^*
L_X, (f \times f)^* \Lambda_X)$$ is a Lefschetz algebra. 

Next, define

$$L' = \sum_{i=1}^{d} (\ell_i \times a_{d-i+1}) + (a_i \times \ell_{d-i+1}) \in 
\rch^{d+1}(Y \times Y)$$

and 

$$\Lambda' = \sum_{j=0}^{d-1} (d-j)(j+1) \left[ (\ell_j \times a_{d-j-1}) + 
(a_j \times \ell_{d-j-1}) \right] \in \rch^{d-1}(Y \times Y).$$

We will show that if we set $$L_Y= (f \times f)^* L_X + L' \in \rch^{d+1}(Y \times Y) \mbox{ and } \Lambda_Y = (f \times f)^* \Lambda_X + \Lambda' \in \rch^{d-1}(Y \times Y)$$
and denote by $\pi_0^Y, \ldots, \pi_{2d}^Y \in \rch^d(Y \times Y)$ the
projectors constructed in (\ref{CK.for.Y}), then
\\ $(\rch^{*+d}(Y \times Y), \{\pi_i^Y\}_{i=0}^{2d}, L_Y, \Lambda_Y)$ is
a Lefschetz algebra.

\begin{proposition}\label{primelefalg}
The following identities hold for each $s$:

\begin{itemize}

\item
$L' \bullet \delta_{2s} = \delta_{2s+2} \bullet L' =\ell_{d-s} \times a_{s+1} + a_{d-s} \times \ell_{s+1}. $

\item
$\Lambda' \bullet \delta_{2s} = \delta_{2s-2} \bullet \Lambda' = s(d-s+1)[(\ell_{d-s} \times a_{s-1}) + (a_{d-s} \times \ell_{s-1})]$.

\item
$\Lambda' \bullet L' - L' \bullet \Lambda' = \sum_{i=0}^{2d} (d-i) \delta_i $.  
\end{itemize}
\end{proposition}

\medskip

\proof

$$L' \bullet \delta_{2s} = L' \bullet \gamma_{d-s} = \sum_{i=1}^d [\ell_i \times a_{d-i+1} + a_i \times \ell_{d-i+1}] \bullet [\eta_{d-s} + \theta_{d-s}].$$

By Lemma \ref{orthoprod}, the only (possibly) nonzero term in this sum
corresponds to $i=d-s$, so
$$L' \bullet \delta_{2s} =  (\ell_{d-s} \times a_{s+1}) \bullet
\eta_{d-s} + (\ell_{d-s} \times a_{s+1}) \bullet \theta_{d-s} 
+ (a_{d-s} \times \ell_{s+1}) \bullet
\eta_{d-s} + (a_{d-s} \times \ell_{s+1}) \bullet \theta_{d-s}.$$

Using Proposition \ref{mvalues} and Corollary \ref{onesided}, we see that the first term equals $\ell_{d-s} \times
a_{s+1}$ and the fourth term equals $a_{d-s} \times
\ell_{s+1}$, while the middle two terms vanish.  Thus, $L' \bullet \delta_{2s} = \ell_{d-s} \times a_{s+1} +
a_{d-s} \times \ell_{s+1}$.  By similar reasoning, 
$$\delta_{2s+2} \bullet L' = (\eta_{d-s-1} + \theta_{d-s-1}) \bullet 
[\ell_{d-s} \times a_{s+1} + a_{d-s} \times \ell_{s+1}]= \ell_{d-s}
\times a_{s+1} + a_{d-s} \times \ell_{s+1}.$$  Therefore, $L' \bullet
\delta_{2s} = \delta_{2s+2} \bullet L'$, as desired.  The proof of the
second formula is similar.  

\medskip

For the third formula,

$$\Lambda' \bullet L' = \sum_{j=0}^{d-1} \sum_{i=1}^d (d-j)(j+1) \left[ (\ell_j \times a_{d-j-1}) + (a_j \times \ell_{d-j-1}) \right] \bullet \left[ (\ell_i \times a_{d-i+1}) + (a_i \times \ell_{d-i+1})  \right]$$

By Lemma \ref{orthoprod}, the $(i,j)$ term of this double sum
will be zero unless $j=i-1$; hence the expression simplifies to

$$\sum_{i=1}^d i (d-i+1) \left[ (\ell_{i-1} \times a_{d-i}) + (a_{i-1} \times \ell_{d-i}) \right] \bullet \left[ (\ell_i \times a_{d-i+1}) + (a_i \times \ell_{d-i+1})  \right]$$
$$= \sum_{i=1}^d i (d-i+1) [ m(\ell_{i-1}, a_{d-i+1}) (\ell_i \times a_{d-i}) + 
m(\ell_{i-1}, 
\ell_{d-i+1}) (a_i \times a_{d-i})$$  $$+ m(a_{i-1}, a_{d-i+1})(\ell_i \times \ell_{d-i}) + m(a_{i-1}, \ell_{d-i+1}) (a_i \times \ell_{d-i}) ]$$
$$=\sum_{i=1}^d i (d-i+1) [(\ell_i \times a_{d-i}) +  (a_i \times \ell_{d-i}) 
].$$
	
Likewise,
$$L' \bullet \Lambda' = \sum_{j=0}^{d-1} \sum_{i=1}^d (d-j)(j+1) 
\left[ (\ell_i \times a_{d-i+1}) + (a_i \times \ell_{d-i+1})  \right]
\bullet
\left[ (\ell_j \times a_{d-j-1}) + (a_j \times \ell_{d-j-1}) \right].  
$$

Again, the only nonzero terms correspond to the case $j=i-1$, so this simplifies 
to $$\sum_{i=1}^d i(d-i+1) 
\left[ (\ell_i \times a_{d-i+1}) + (a_i \times \ell_{d-i+1})  \right]
\bullet
\left[ (\ell_{i-1} \times a_{d-i}) + (a_{i-1} \times \ell_{d-i}) \right]  
$$

$$=\sum_{i=1}^d i(d-i+1) [m(\ell_i, a_{d-i}) (\ell_{i-1} \times a_{d-i+1}) + 
m(\ell_i, \ell_{d-i})(a_{i-1} \times a_{d-i+1})$$ 
$$+ m(a_i, a_{d-i}) 
(\ell_{i-1} 
\times \ell_{d-i+1}) + m(a_i, \ell_{d-i})(a_{i-1} \times \ell_{d-i+1})]$$
$$=\sum_{i=1}^d i(d-i+1)[(\ell_{i-1} \times a_{d-i+1}) + (a_{i-1} \times 
\ell_{d-i+1})]=\sum_{i=0}^{d-1} (i+1)(d-i) [(\ell_i \times a_{d-i}) + (a_i \times 
\ell_{d-i})].$$

Hence,
\begin{align*}
\begin{split}
& \Lambda' \bullet L' - L' \bullet \Lambda' \\ 
&=-d[(\ell_0 \times a_d) + (a_0 \times \ell_d)] + \sum_{i=1}^{d-1} 
[i(d-i+1)-(i+1)(d-i)] [(\ell_i \times a_{d-i}) + (a_i \times
\ell_{d-i})]  + d[(\ell_d \times a_0) + (a_d \times \ell_0)] \\
&=-d \gamma_0 + \sum_{i=1}^{d-1}(2i-d) \gamma_i + d \gamma_d=\sum_{i=0}^d (2i-d) \gamma_i=\sum_{i=0}^{d} (2i-d) \delta_{2(d-i)}=\sum_{i=0}^{d} (d-2i) \delta_{2i}. \\
\end{split}
\end{align*}

Since $\delta_j=0$ for $j$ odd, reindexing gives $\Lambda' \bullet L' - L'
\bullet \Lambda'=\sum_{j=0}^{2d} (d-j) \delta_j$, as desired. 

\qed

\begin{corollary}\label{sigmastable}
$$L' \bullet \sigma = L'=\sigma \bullet L', ~ \Lambda' \bullet \sigma
= \sigma \bullet \Lambda' = \Lambda', \mbox{ and } \delta_j \bullet
\sigma = \sigma \bullet \delta_j = \sigma \mbox{ for } j,~ 0 \leq j \leq 2d.$$

\end{corollary}

\proof

By the definition of $L'$ and Proposition \ref{primelefalg}, $$L' = \sum_{i=0}^d L' \bullet
\delta_{2(d-i)} = L' \bullet \sum_{i=0}^d \delta_{2(d-i)} = L' \bullet
\sigma \mbox{ and } L' = \sum_{i=0}^d \delta_{2i} \bullet L'=
(\sum_{i=0}^d \delta_{2i}) \bullet L' = \sigma \bullet L',$$ 

which establishes the first set of equalities; the proof for the second set is similar. 
The third set of equalities is a restatement of (\ref{gammass}).  
\qed
\endproof

\medskip

\textbf{Proof of Theorem \ref{leftheorem}}

Suppose, as above, that $\ds (\rch^{*+d}(X \times X), \{\pi_i^X\}_{i=0}^{2d}, L_X, \Lambda_X)$ is a
Lefschetz algebra.  Let $\ds \{\pi_i^Y\}_{i=0}^{2d}$ denote the Chow-\kuns decomposition defined by (\ref{CK.for.Y}); we will show that \\ $(\rch^{*+d}(Y \times Y), \{\pi_i^Y\}_{i=0}^{2d}, (f \times f)^*L_X + L', (f \times f)^*\Lambda_X + \Lambda')$ is a Lefschetz
algebra. The verification is purely formal, so we will show it only
for the first formula in Definition \ref{lefalg}; the rest follow by
analogous reasoning. First,
$$((f \times f)^* L_X  + L') \bullet ((f \times f)^* \pi_{2s}^X +
\delta_{2s}) = (f \times f)^* L_X \bullet (f \times f)^* \pi_{2s}^X + (f
\times f)^* L_X \bullet \delta_{2s} + L' \bullet (f \times f)^*
\pi_{2s}^X + L' \bullet \delta_{2s}.$$ 

By Lemma \ref{compat} and the hypothesis, the first term on the
right equals $$(f \times f)^*(L_X \bullet \pi_{2s}^X) = (f \times
f)^*(\pi_{2s+2}^X \bullet L_X) = (f \times f)^*(\pi_{2s+2}^X) \bullet (f
\times f)^* L_X.$$

Since $(f \times f)^* L_X \in A$
and $\delta_{2s} =\sigma \bullet \delta_{2s}$ by (\ref{gammass}), we
may compute the second term:  $(f \times f)^* L_X \bullet \delta_{2s}
= (f \times f)^* L_X \bullet \zeta \bullet \sigma \bullet
\delta_{2s}=0.$  Likewise, the third term is  
$L' \bullet (f \times f)^* \pi_{2s}^X = L' \bullet \sigma \bullet \zeta
\bullet (f \times f)^* \pi_{2s}^X = 0.$   The last term equals
$\delta_{2s+2} \bullet L'$ by Proposition \ref{primelefalg}.  Thus, we
have:

$$((f \times f)^* L_X  + L') \bullet ((f \times f)^* \pi_{2s}^X +
\delta_{2s}) = (f \times f)^*(\pi_{2s+2}^X) \bullet (f \times f)^* L_X
+ \delta_{2s+2} \bullet L'.$$

Similarly one shows
$$(f \times f)^*(\pi_{2s+2}^X) \bullet (f \times f)^* L_X
+ \delta_{2s+2} \bullet L' = ((f \times f)^*(\pi_{2s+2}^X) +
\delta_{2s+2}) \bullet ((f \times f)^* L_X + L'),$$
completing the argument.

\medskip

Conversely, suppose $(\rch^{*+d}(Y \times Y), \{\pi_i^Y\}_{i=0}^{2d},
L_Y, \Lambda_Y)$ is a Lefschetz algebra, i.e. the identities of
Definition \ref{lefalg} are satisfied.  By Corollary  \ref{pushdown},
the analogous identities required to show that \\$(\rch^{*+d}(X \times
X), \{(f \times f)_*\pi_i^Y \}_{i=0}^{2d}, (f \times f)_* L_Y, (f \times
f)_* \Lambda_Y)$ is a Lefschetz algebra also hold.   
\qed

\endproof

\subsection{Agreement with the usual Lefschetz operator}

In the proof of Theorem \ref{leftheorem}, we constructed the Lefschetz operator $L_Y$ in terms of $L_X$ and an extra term $L'$.  
In the following we show that if $d=\dim X \geq 2$ and if $L_X$ takes the usual form of the Lefschetz operator, then the same is true for $Y$.  Letting $\Delta_X: X \arr X \times X$ and $\Delta_Y: Y \arr Y \times Y$ denote the respective diagonal maps, we show that if $L_X = (\Delta_X)_* (b)$ for some divisor $b \in \rch^1(X)$, then $L_Y = (\Delta_Y)_*(b')$ for some $b' \in \rch^1(Y)$.  

\medskip 


To this end, let $b \in \rch^1(X)$, and write $b'=f^*b + j_*[Z] \in \rch^1(Y)$.  Writing $L_{b'} = (\Delta_Y)_* (b')$, we have: $$(f \times f)_* L_{b'} = (f \times f)_* (\Delta_Y)_* (b') = (\Delta_X)_* f_* (b') = (\Delta_X)_* f_* f^*(b) +  (\Delta_X)_* f_* j_* [Z] = (\Delta_X)_* (b) +   (\Delta_X)_* i_* g_* [Z] = L_X.$$   

By Proposition \ref{dstrans}, there exists $M \in B_{d+1}$ such that $L_{b'} = \zeta \bullet L_{b'} \bullet \zeta + M$; that is, 

$$M = L_{b'} - \zeta \bullet L_{b'} \bullet \zeta = L_{b'} - (f \times f)^* (f \times f)_* L_{b'} = L_{b'} - (f \times f)^* L_X$$
We will show that $M=L'$; it will then follow that the Lefschetz operator $L_Y$ constructed in Theorem \ref{leftheorem} coincides with $L_{b'}$.  Note also that 

\begin{equation}\label{mexp}
M=L_{b'} - ([\Delta_Y]-\sigma) \bullet L_{b'} \bullet ([\Delta_Y]-\sigma) = \sigma \bullet L_{b'} + L_{b'} \bullet \sigma - \sigma \bullet L_{b'} \bullet \sigma
\end{equation}

\medskip

Observe that $$\sigma \bullet L_{b'} = \sum_{i=0}^d (a_i \times \ell_{d-i}  + \ell_i \times a_{d-i}) \bullet (\Delta_Y)_* (b')= \sum_{i=0}^d (a_i \bullet (\Delta_Y)_* (b')) \times \ell_{d-i}  + (\ell_i \bullet (\Delta_Y)_* (b')) \times a_{d-i}.$$

Now for $y \in \rch^i(Y)$, $y \bullet (\Delta_Y)_*(b') = {p_{2}^{YY}}_* ({p_1^{YY}}^* y \cdot (\Delta_Y)_* (b')) =  {p_{2}^{YY}}_*   (\Delta_Y)_* ((\Delta_Y)^* {p_1^{YY}}^* y \cdot b') = y \cdot b'$; thus,  $\ds \sigma \bullet L_{b'} = \sum_{i=0}^d  (a_i \cdot b') \times \ell_{d-i} + (\ell_i \cdot b') \times a_{d-i}.$ Also, $$\ell_i \cdot b' = j_* \ell^{i-1} \cdot (f^*(b) +   j_*[Z]) = j_*(\ell^{i-1} \cdot (j^* f^*(b) +  j^* j_* [Z])) = j_*(\ell^{i-1} \cdot g^* i^* (b)+  \ell^{i-1} \cdot  \ell) =  j_* \ell^i = \ell_{i+1}.$$

Thus, $\sigma \bullet L_{b'} = \sum_{i=0}^d  (a_i \cdot b') \times \ell_{d-i} +  \ell_{i+1} \times a_{d-i} .$  Now each term in the above summand is a product cycle of type $(i+1, d-i)$, so we have:
\begin{align*}
\begin{split}
& \sigma \bullet L_{b'} = (\sigma \bullet L_{b'}) \bullet \gamma_{i+1} \\
&=\sum_{i=0}^d ((a_i \cdot b') \times \ell_{d-i}) \bullet (a_{i+1} \times\ell_{d-i-1} + \ell_{i+1} \times a_{d-i-1}) +  (\ell_{i+1} \times a_{d-i}) \bullet 
(a_{i+1} \times \ell_{d-i-1} + \ell_{i+1} \times a_{d-i-1}) \\
&=\sum_{i=0}^d  m(\ell_{d-i-1}, a_i \cdot b') (a_{i+1} \times \ell_{d-i}) 
+ m(a_{d-i-1}, a_i \cdot b') (\ell_{i+1} \times \ell_{d-i})  \\ 
& +  m(\ell_{i+1}, \ell_{d-i-1}) (a_{i+1} \times a_{d-i}) +  m(\ell_{i+1}, a_{d-i-1}) (\ell_{i+1} \times a_{d-i}).
\end{split}
\end{align*}

By Proposition \ref{mvalues},the above simplifies to: $$\sigma \bullet L_{b'} = \sum_{i=0}^d  m(\ell_{d-i-1}, a_i \cdot b') (a_{i+1} \times \ell_{d-i}) 
+ m(a_{d-i-1}, a_i \cdot b') (\ell_{i+1} \times \ell_{d-i}) +
(\ell_{i+1} \times a_{d-i}).$$
Let $q: Y \arr \spec k$ be the structure morphism.  Then we have: 
$$m(\ell_{d-i-1}, a_i \cdot b') = q_* (j_* \ell^{d-i-2} \cdot  a_i \cdot (f^*(b) + r j_*[Z])) = q_*( j_*(\ell^{d-i-2} \cdot j^*(a_i) \cdot j^* f^*(b)) + q_*( j_*(\ell^{d-i-2} \cdot   j^*(a_i) \cdot j^* j_*[Z])$$ 
$$ =  q_*(j_*(\ell^{d-i-2} \cdot j^* (a_i) \cdot \ell)) =  q_* j_* (\ell^{d-i-1} \cdot j^*(a_i)) = q_* (j_* \ell^{d-i-1} \cdot a_i) =   m(\ell_{d-i}, a_i) =1.$$
Also, $m(a_{d-i-1}, a_i \cdot b')= q_*(a_{d-i-1} \cdot a_i \cdot b')$, but $a_{d-i-1} \cdot a_i = \Delta_Y^*(a_{d-i-1} \times a_i)=0$ by Proposition \ref{mvalues}, so $m(a_{d-i-1}, a_i \cdot b')=0$.

Therefore, $\sigma \bullet L_{b'} =  \sum_{i=0}^d   (a_{i+1} \times \ell_{d-i}) +  (\ell_{i+1} \times a_{d-i})=L'.$
From the definitions, one sees immediately that $\sigma^t = \sigma$, $L_{b'}^t = L_{b'}$, and the above calculation shows that $(\sigma \bullet L_{b'})^t = \sigma \bullet L_{b'}$.  Hence, $L_{b'} \bullet \sigma = L_{b'}^t \bullet \sigma^t = (\sigma \bullet L_{b'})^t = \sigma \bullet L_{b'}.$  By (\ref{mexp}), $M=\sigma \bullet L_{b'}=L'.$

\section{\bf Applications to Kummer varieties and manifolds}\label{app2}
Let $A$ be an abelian variety of dimension $d$ over an
algebraically closed field $k$ of characteristic $\neq 2$.  The
associated Kummer variety $K_A$ is obtained by taking the quotient of
$A$ by the action of the group (scheme) $G$ generated by the
involution $a \mapsto -a$.

\medskip

The Kummer manifold is obtained from $K_A$ by blowing up the singular locus of $A$ -- that is, by blowing up the image of the $2$-torsion points of $A$ under the quotient map $q: A \arr
K_A$.  As observed in \cite[p.4]{dl}, $K_A$ may be embedded in $\ds
{\mathbb{P}}^{2^d-1}$ using a symmetric theta divisor; thus, the image
of any $2$-torsion point is a singular point, \'{e}tale locally
isomorphic to the affine cone over the second Veronese variety of
${\mathbb{P}}^{d-1}$.  This can be seen by observing that the negation
involution of the abelian variety $A$ acts locally by
$(z_1,...,z_g)\arr (-z_1,\ldots,-z_g)$,  because it acts so on the
tangent space.  The ring of invariants is generated by polynomials
$z_iz_j$; hence, the exceptional divisor of the blow-up of the Kummer
variety at a $2$-torsion point is isomorphic to ${\mathbb P}^{d-1}$.
Now if $a \in A$ is a $2$-torsion point, $\overline{A}$ the blow-up of
$A$ along $\{a\}$, and $\overline{K}_A$ the blow-up of $K_A$ along
$\{q(a)\}$, then the universal property of the blow-up gives an
induced map $h: \overline{A}/G \arr \overline{K}_A$.  Since the exceptional
divisors of both blow-ups are (each) isomorphic to
${\mathbb{P}}^{d-1}$ and $K_A$ is known to be normal \cite{sas}, $h$ is a quasi-finite proper birational
map.  Because $\overline{K}_A$ is also normal, $h$ is an isomorphism by
Zariski's main theorem.  This proves that
$\overline{K}_A$ is also a pseudo-smooth variety.  A similar argument
shows that the intermediate schemes obtained by successively blowing up
each of the singular points on $K_A$ also satisfy the same
hypotheses.  Let $f: K_A' \arr K_A$ denote the composition of all these blow-up maps; $K_A'$ is then a smooth variety, 
the so-called Kummer manifold associated to $A$. 

\medskip

\subsection{Murre's conjectures and the Lefschetz decomposition for Kummer manifolds}
\begin{corollary}
The Kummer manifold $Y$ has a Chow-\kuns decomposition $[\Delta_{Y}] = \sum_{i=0}^{2d}
\pi_i^Y$ satisfying Poincar\'{e} duality, and is also of Lefschetz
type.  Furthermore, $Y$ satisfies Murre's conjecture \textbf{B'}, and when $d \leq 4$, $Y$ satisfies Murre's
conjecture \textbf{B}.  
\end{corollary}

\begin{proof}
 For convenience, let $X=K_A$ and $Y=K_A'$.   In \cite[Section 3]{dm}, Deninger and Murre constructed a particular 
Chow-\kuns decomposition $[\Delta_A]=\sum_{i=0}^{2d} \pi_i^A$ for $A$.   Using this construction, the present authors showed in \cite{aj1} that if we set $\pi_i^X=(q \times q)_* \pi_i^A \in \rch^d(X \times X)$, then  
$[\Delta_X]=\sum_{i=0}^{2d} \pi_i^X$ is a Chow-\kuns decomposition for $X$ satisfying Poincar\'{e}
duality and Murre's Conjecture \textbf{B}'; when $d \leq 4$, $X$ also satisfies  Murre's Conjecture \textbf{B}.  We also showed in \cite[Theorem 1.2]{aj2} that $X$ is of Lefschetz type.  The conclusion then follows by application of Theorem \ref{blowup}, Theorem \ref{leftheorem}, and Proposition \ref{conjb}. \qed
\end{proof}
\vskip .2cm \noindent


\textbf{Remarks.} 

\smallskip

1. A similar result holds for any of the intermediate
schemes obtained by blowing up some (but not all) of the singular points on $X$.
\vskip .2cm  \noindent
2. The referee has pointed out an alternate strategy for constructing an explicit Chow-\kuns decomposition on the Kummer manifold $Y$, based on a different construction of the latter. 
The negation map $a \mapsto -a$ on the abelian variety $A$  defines an action of $G=\zah/2 \zah$ on $A$ in the obvious manner.   If one blows up the locus of $2$-torsion points on $A$ to obtain a variety $\tilde{A}$, then the action of $G$ on $A$ extends in a natural way to an action of $G$ on $\tilde{A}$.  The Kummer manifold $Y$ can then be 
realized as the quotient variety $\tilde{A}/G$.  Therefore, if one starts with a Chow-\kuns decomposition on the Abelian variety that is stable under the action of $\zah/2\zah$, one could apply the results of \cite[Remark 5.5]{v} or \cite[Proposition 2.10]{sv} to obtain an explicit Chow-Kunneth decomposition on
$\tilde A$, which could then be descended to a Chow-Kunneth decomposition on the Kummer manifold $Y$ as in \cite[Corollary 2.13]{sv}.

\medskip

\subsection{Algebraic equivalence on Kummer varieties and manifolds}

Continuing the notation and assumptions of the previous section, we apply our explicit construction to study powers of the relation of algebraic equivalence on $X$ and on $Y$.

\medskip

In \cite{sam}, Samuel defined the notion of an {\em adequate equivalence
relation} on algebraic cycles and proved that rational equivalence is the finest such
relation.  Having fixed a field $k$, an adequate equivalence relation $E$ is an
assignment, to every pseudo-smooth variety $V$ over $k$, of a
subgroup $ECH^*(V) \subseteq CH^*(V)$, which is preserved under
pullback, pushforward, and intersection with arbitrary cycles.
Algebraic equivalence, homological equivalence, and numerical
equivalence all examples of adequate equivalence relations.  Hiroshi
Saito \cite{sai} defined the product $E*E'$ of two adequate equivalence
relations $E$ and $E'$, and proved that $E*E'$ is itself adequate.  He
also proved that the operation is associative and commutative, and
distributes in the expected manner over sums of relations (defined in the expected 
manner).

\medskip

If $A$ is an abelian variety of dimension $d$ over an algebraically closed field $k$, 
there is a natural filtration on $\rch^p(A)$ defined by the Deninger-Murre Chow-\kuns projectors: 
for $r \in \zah$, set $F^r \rch^p(A)=\sum_{i=0}^{2p-r} \pi_i^A \bullet \rch^p(A)$.  
A conjecture of Beauville \cite{be} is equivalent to the assertion that the nontrivial steps in 
this filtration occur only in positive degree, i.e. $F^0 \rch^p(A) = \rch^p(A)$.  This is easily seen to be 
equivalent to the assertion that $\pi_i$ acts as $0$ on $\rch^j(A)=0$ when $i<2j$, which is the second half of 
Murre's Conjecture \textbf{B}.

\medskip

Now let $L$ denote the (adequate) relation of algebraic equivalence.
Its $r$th power $L^{*r}$ is the so-called {\em $r$-cubical equivalence} introduced
in \cite{sam}.  In previous work of the first author, the following was proved
in a slightly stronger form:

\begin{theorem}\label{adeqr}\cite[Theorem 3.1 and Proposition 3.3]{ak}
Assume Beauville's Conjecture, and let $A$ be an abelian variety over an algebraically closed field.  Then: 

\begin{enumerate}
\item
For $r \geq 1$, $F^r \rch^d(A) = L^{*r} \rch^d(A)$.  

\item
For $r>d$, $L^{*r} \rch^*(A)=0$.

\end{enumerate}

\end{theorem}

The second statement is a kind of nilpotence assertion for cycles on abelian varieties.  We will show that our constructions yield similar results for $X$ and $Y$.

\medskip

First, define filtrations on $\rch^p(X)$ and $\rch^p(Y)$ by 

$$F^r \rch^p(X) = \sum_{i=0}^{2p-r} \pi_i^X \bullet \rch^p(X) \mbox{ and } F^r \rch^p(Y)=\sum_{i=0}^{2p-r} \pi_i^Y \bullet \rch^p(Y).$$

Then for $i$, $0 \leq i \leq 2d$ and $\alpha \in \rch^*(X)$, we have:
\begin{equation}\label{pushfil}
(q \times q)_* \pi_i^A \bullet \alpha = q_* (\pi_i^A \bullet q^* \alpha)
\end{equation}

since both sides are equal (as correspondences) to $\Gamma_q \bullet \pi_i^A \bullet \Gamma_q^t \bullet \alpha$.  Note that this identity is also expressed by the formula $F^r \rch^p(X) = q_* F^r \rch^p(A)$.  

\begin{proposition}\label{adeqkummer}
Assume Beauville's conjecture.  Then the conclusions of Theorem \ref{adeqr} hold when $A$ is replaced by either $X$ or $Y$.
\end{proposition}

\proof

Suppose $r \geq 1$.  Then Theorem \ref{adeqr}, together with adequacy of $L^{*r}$, implies
$$F^r \rch^d(X)=q_* F^r \rch^d(A) = q_* L^{*r} \rch^d(A) \subseteq L^{*r} \rch^d(X)$$

Likewise, since $q_* q^*$ is multiplication by $|G|$, 
$$L^{*r} \rch^d(X) = q_* q^* L^{*r} \rch^d(X) \subseteq q_*(L^{*r} \rch^d(A))=q_*(F^r \rch^d(A))=F^r \rch^p(X).$$

This proves the first statement for $X$.  For the second statement, simply observe that for $r>d$,

$$L^{*r} \rch^*(X) = q_* q^* L^{*r} \rch^*(X) \subseteq q_*(L^{*r} \rch^*(A))=0.$$

\medskip

To deduce the statements for $Y$, apply Proposition \ref{singleorth}  to write $\rch^d(Y) = C_d + D_d$.  Direct computation shows that $D_d=0$, so since $\pi_i^Y = (f \times f)^* \pi_i^X + \delta_i$ and $\delta_i = \delta_i \bullet \sigma$ by Corollary \ref{sigmastable}, another application of  Proposition \ref{singleorth} implies 
$$F^r \rch^d(Y)=\sum_{i=0}^{2d-r} \pi_i^Y \bullet \rch^d(Y) = 
\sum_{i=0}^{2d-r} (f \times f)^* \pi_i^X \bullet C_d  = \sum_{i=0}^{2p-r} f^*
(\pi_i^X \bullet \rch^d(X))   = f^* F^r \rch^d(X).$$
Now, using adequacy of $L^{*r}$, we have, for $r \geq 1$:
$$F^r \rch^d (Y) = f^* F^r \rch^d(X)=f^* L^{*r} \rch^d(X)
\subseteq L^{*r} \rch^d(Y).$$ 
Also, because $\rch^d(Y) = f^* \rch^d(X)$, we have $\rch^d(Y) = f^* f_* \rch^d(Y)$, so 
$$L^{*r} \rch^d(Y)  = f^* f_* L^{*r} \rch^d(Y) \subseteq f^* L^{*r} \rch^d(X)  = f^* F^r \rch^d(X)= F^r \rch^d(Y).$$
This establishes the first statement.  For the second, simply note that
for $r>d$, $$L^{*r} \rch^*(Y) = f^* f_* L^{*r} \rch^*(Y) \subseteq f^*
L^{*r} \rch^*(X) = 0.$$

\qed

\subsection{A Hard Lefschetz Theorem for Chow groups of Kummer manifolds}

As an application of the explicit Lefschetz decomposition constructed in Section \ref{lefschetz}, we prove the following theorem.  

\begin{theorem} (Hard Lefschetz for Chow groups)\label{harlef}
With notation and assumptions as in Sections \ref{maincon} and \ref{lefschetz}, suppose further that $X$ is of Lefschetz type and that for $2p \leq d$, the map $H_X: \rch^p(X) \arr \rch^{d-p}(X)$ defined by $a \mapsto L_X^{d-2p} \bullet a$ is an isomorphism.  Then the map $H_Y: \rch^p(Y) \arr \rch^{d-p}(Y)$ defined by $z \mapsto L_Y^{d-2p} \bullet z$ is an isomorphism.
\end{theorem}

\proof

By the direct sum decomposition $\rch^i(Y) \cong C_i\oplus D_i$ from Section \ref{app1}, any $z \in \rch^i(Y)$ may be written (uniquely) as $z=f^*x + y$, where $x \in \rch^i(X)$ and $y \in D_i$.  Then $$L_Y \bullet z = ((f \times f)^* L_X + L') \bullet (f^*x + y) = (f \times f)^* L_X \bullet f^* x + (f \times f)^* L_X \bullet y + L' \bullet f^*x  + L' \bullet y.$$

Using Proposition \ref{smallsurprise} to simplify the first two terms, and the equalities $\sigma \bullet L' = L' = L' \bullet \sigma$ from Corollary \ref{sigmastable}, we conclude: $L_Y \bullet z=f^*(L_X \bullet x) + (f \times f)^* L_X \bullet y  + L' \bullet \sigma \bullet f^*x+ L' \bullet y.$  
By Proposition \ref{singleorth}, the middle two terms are $0$, so we have $L_Y \bullet z = f^*(L_X \bullet x) + L' \bullet y.$  Using ordered pair notation (as in Section \ref{app1}) to express the decompositions $\rch^*(Y \times Y) \cong A \oplus B$ and $\rch^i(Y) \cong C_i \oplus D_i$, we have: 
$$L_Y \bullet z = ((f \times f)^*L_X , L') \bullet (f^*x, y) = (f^*(L_X \bullet x), L' \bullet y)$$
and hence, by induction,
$$L_Y^{d-2p} \bullet z=  (f^*(L_X^{d-2p} \bullet x), {L'}^{d-2p} \bullet y).$$
Thus, to prove that the map $H_Y: C_p \oplus D_p \arr C_{d-p} \oplus D_{d-p}$ defined above is an isomorphism, it suffices to check that the induced maps $u:C_p \arr C_{d-p}$ defined by $f^*x \mapsto f^*(L_X^{d-2p} \bullet x)$ and $v: D_p \arr D_{d-p}$ defined by $y \mapsto {L'}^{d-2p} \bullet y$ are isomorphisms.  That $u$ is an isomorphism follows formally, since $H_X$ is an isomorphism and $f^*$ is injective.

\medskip

We will also need an unweighted version of the $\Lambda'$ operator, defined by:
$$\Lambda_0' = \sum_{j=0}^{d-1} \ell_j \times a_{d-j-1} + a_j \times \ell_{d-j-1} \in \rch^{d-1}(Y \times Y).$$
We claim that the map $D_{d-p} \arr D_p$ defined by $y \mapsto \Lambda_0'^{d-2p} \bullet y$ is a two-sided inverse to $v$.  Fortunately, both $L'$ and $\Lambda_0'$ are product cycles, so we can calculate their powers explicitly.  By Lemma \ref{orthoprod}, we have: 

$${L'}^{2} = \sum_{i=1}^d [(\ell_i \times a_{d-i+1}) + (a_i \times \ell_{d-i+1})] \bullet \sum_{j=1}^d [(\ell_j \times a_{d-j+1}) + (a_j \times \ell_{d-j+1})]$$ $$= \sum_{i=1}^d [(\ell_i \times a_{d-i+1}) + (a_i \times \ell_{d-i+1})] \bullet  [(\ell_{i+1} \times a_{d-i}) + (a_{i+1} \times \ell_{d-i})]$$
$$=\sum_{i=1}^d  [ m(\ell_i, a_{d-i}) \ell_{i+1} \times a_{d-i+1} + m(\ell_i, \ell_{d-i}) a_{i+1} \times a_{d-i+1} + m(a_i, a_{d-i}) \ell_{i+1} \times \ell_{d-i+1} + m(a_i, \ell_{d-i}) a_{i+1} \times \ell_{d-i+1}$$

By Proposition \ref{mvalues}, the middle two terms vanish and the expression simplifies to $\sum_{i=1}^d   \ell_{i+1} \times a_{d-i+1} +  a_{i+1} \times \ell_{d-i+1} = \sum_{i=2}^d   \ell_{i} \times  a_{d-i+2} +  a_i \times \ell_{d-i+2}$.  Arguing inductively, we conclude

$${L'}^{d-2p} = \sum_{i=d-2p}^d   \ell_{i} \times  a_{2d-2p-i} +  a_i \times \ell_{2d-2p-i}.$$

Similarly, we compute

$$\Lambda_0'^{d-2p} = \sum_{j=0}^{2p} \ell_j \times a_{2p-j} + a_j \times \ell_{2p-j}.$$
If $z \in \rch^p(Y)$, direct computation shows that for $\alpha \in \rch^i(Y)$, $\beta \in \rch^j(Y)$, we have $(\alpha \times \beta) \bullet z=0$ unless $i+p=d$.  Using this principle, we see 
$\ds {L'}^{d-2p} \bullet z = [\ell_{d-p} \times a_{d-p} + a_{d-p}
\times \ell_{d-p}] \bullet z$ and hence 
$$\Lambda_0'^{d-2p} \bullet ({L'}^{d-2p} \bullet z) = [a_p \times \ell_p + \ell_p  \times a_p] \bullet  [\ell_{d-p} \times a_{d-p} + a_{d-p} \times \ell_{d-p}] \bullet z=(a_{d-p} \times \ell_p + \ell_{d-p} \times a_p) \bullet z=\gamma_{d-p} \bullet z.$$
Now suppose further that $z \in D_p \subseteq \rch^p(Y)$.  Then 
$$z=[\Delta_Y] \bullet z = (f \times f)^*[\Delta_X] \bullet z + \sigma \bullet z.$$
The first term vanishes by Proposition \ref{mvalues}, and since $\sigma = \sum_{i=0}^d \gamma_i$, where each $\gamma_i$ is a product cycle of type $(i,d-i)$, we see that $\sigma \bullet z= \gamma_{d-p} \bullet z$.  

Summarizing, we have 
$$(\Lambda_0'^{d-2p} \bullet {L'}^{d-2p}) \bullet z = z.$$
A similar calculation shows that for $e \in D_{d-p} \subseteq \rch^{d-p}(Y)$, 
$$({L'}^{d-2p} \bullet \Lambda_0'^{d-2p}) \bullet e = e.$$
This shows that the maps $z \mapsto {L'}^{d-2p} \bullet z$ and $e
\mapsto \Lambda_0'^{d-2p} \bullet e$ are mutually inverse isomorphisms between $D_p$
and $D_{d-p}$, completing the proof. \qed

\medskip

By the results of \cite{aj2}, the hypotheses of Theorem \ref{harlef} are satisfied for Kummer varieties over finite fields.  By taking direct limits, one easily argues that they also hold for Kummer varieties over the algebraic closure of a finite field.  Thus, we have:   

\begin{corollary}\label{hardlefth}
Let $Y$ be the Kummer manifold associated to an abelian variety of dimension $d>0$ over an  algebraic closure of some finite field of characteristic different from $2$.  Then for $2p \leq d$, the map $\rch^p(Y) \arr \rch^{d-p}(Y)$ defined by $z \mapsto L_Y^{d-2p} \bullet z$ is an isomorphism.  
\end{corollary}

\vskip .3cm

\end{document}